\newtheorem{theorem}[subsection]{Theorem}
\newtheorem{lemma}[subsection]{Lemma}
\newtheorem{proposition}[subsection]{Proposition}
\newtheorem{conjecture}[subsection]{Conjecture}
\theoremstyle{definition}
\newtheorem{definition}[subsection]{Definition}
\theoremstyle{remark}
\newtheorem{remark}[equation]{Remark}
\newenvironment{proof1}{\noindent {\it Proof of Theorem}}{\hfill   $\Box$\vspace{0.3cm}}
\numberwithin{equation}{subsection}
\newcommand{\FF}{\mathbb{F}}
\newcommand{\ZZ}{\mathbb{Z}}
\newcommand{\QQ}{\mathbb{Q}}
\newcommand{\CC}{\mathbb{C}}
\newcommand{\NN}{\mathbb{N}}
\newcommand{\laurent}[2]{{#1 (\!( #2 )\!)}}
\begin{document}
\title[Anderson-Thakur polynomials and multizeta values]{{\small Anderson-Thakur polynomials and multizeta values in finite characteristic}}
\author{Huei-Jeng Chen}
\date{}
\maketitle
\section{Introduction}
The study of arithmetic of zeta values begins by Euler's famous evaluations: for $m\in \NN$,
\[
\zeta(2m)=\frac{-B_{2m}\left( 2\pi\sqrt{-1} \right)^{2m} }{2(2m)!},
\]
where $B_{2m}\in \QQ$ are Bernoulli numbers. Euler's formula implies that $\zeta(n)/(2\pi \sqrt{-1})^{n}$ is rational if and only if $n$ is even. As generalizations of zeta values, Euler studied multiple zeta values $\zeta(s_{1},\cdots,s_{r})$, where $s_{1},\ldots,s_{r}$ are positive integers with $s_{1}\geq 2$. Although there exist simple relations between zeta and multiple zeta values, such as $\zeta(2,1) = \zeta (3)$, sorting out all relations among these multiple zeta values is a much involved problem. Here $r$ is called the depth and $w:=\sum_{i=1}^{r}s_{i}$ is called the weight of $\zeta(s_{1},\ldots,s_{r})$. We call $\zeta(s_{1},\ldots,s_{r})$ \emph{Eulerian} if the ratio $\zeta (s_{1},\ldots,s_{r}) / \bigl( 2\pi \sqrt{-1})^w$
is rational.

Carlitz introduced and derived an analogue of Euler's formula for what we now called Carlitz zeta values $\zeta_{A} (n)$. Let $A=\FF_q[\theta]$ be the polynomial ring in the variable $\theta$ over a finite field $\FF_q$ and $K = \FF_q(\theta)$ be its quotient field. Let $\bf{C}$ be the Carlitz module and $\widetilde{\pi}$ is a fundamental period of $\bf{C}$. The Carlitz exponential
function is defined by $\exp_{\bf{C}}(z) = \sum_{n\geq 0} \displaystyle \frac{z^{q^n}}{D_n}$. We denote by $\Gamma_{n+1} \in A$ the Carlitz factorials and
$BC(n)\in K$ by the Bernoulli-Carlitz numbers. Carlitz showed that $$\zeta_A(n) := \sum_{a\in A+} \displaystyle \frac{1}{a^n}=\displaystyle\frac{BC(n)}{\Gamma_{n+1}}\widetilde{\pi}^n$$ if $q-1 | n.$ Carlitz's result implies that $\zeta_A(n) / \tilde{\pi}^n$ is rational in $K$ if and only if $q-1 | n$.

Anderson and Thakur \cite{AT} related the interesting value $\zeta_{A}(n)$ to a special integral point $Z_{n}$ on $\mathbf{C}^{\otimes n}(A)$ via the logarithm map of $\mathbf{C}^{\otimes n}$, where $\mathbf{C}^{\otimes n}$ denotes the $n$-th tensor power of the Carlitz module. As a consequence, one has that $\zeta_{A}(n)/\tilde{\pi}^{n}$ is rational if and only if $Z_{n}$ is a $\FF_{q}[t]$-torsion point, and this condition is equivalent to $n$ being divisible by $q-1$. In \cite{AT} a key role is played by a sequence of distinguished polynomials $H_n\in A[t]$, now called the Anderson-Thakur polynomials. On the other hand, Yu \cite{Yu} also showed that the transcendence of $\zeta_{A}(n)/\tilde{\pi}^{n}$ over $K$ is equivalent to $Z_{n}$ being non-torsion on $\mathbf{C}^{\otimes n}(A)$, whence deriving that $\zeta_{A}(n)/\tilde{\pi}^{n}$ is algebraic over $K$ if and only if $\zeta_{A}(n)/\tilde{\pi}^{n}$ is rational in $K$.

In the last decade, Thakur \cite{Tbook, T1} initiated the study of multizeta values $\zeta_A(s_{1},\cdots,s_{r})$. He and his co-workers discovered interesting relations among some multizeta values.
Call $\zeta_A(s_{1},\ldots,s_{r})$ \emph{Eulerian} (\emph{zeta-like} resp.) if the ratio $\zeta (s_{1},\cdots,s_{r}) / \tilde{\pi}^w$
( $\zeta_A (s_{1},\ldots,s_{r}) / \zeta_A (w)$ resp.) is rational in $K$. A basic question in this respect is to find all Eulerian/zeta-like multizeta values. In \cite{LRT}, Lara Rodriguez and Thakur gave particularly precise formulas for certain families of Eulerian/zeta-like multizeta values and conjectured other ones. Their conjectures are supported by numerical data from continued fraction computations. On the other hand, Chang \cite{Cha} also proved the subtle fact that these ratios $\zeta (s_{1},\cdots,s_{r}) / \tilde{\pi}^w$,
$\zeta_A (s_{1},\ldots,s_{r}) / \zeta_A (w)$ are either rational or transcendental over $K$.

In an effort to understand relations among multizeta values, Chang, Papanikolas and Yu \cite{CPY} established an effective criterion for Eulerian/zeta-like multizeta values by constructing an abelian t-module $E'$ defined over $A$ and relating the values $\zeta_A(s_{1},\cdots,s_{r})$, $\zeta_A(w)$ to specific integral points $\mathbf{v}_{\mathfrak{s}}$, $\mathbf{u}_{\mathfrak{s}}$ on $E'(A)$. They proved that $\zeta_A(s_{1},\cdots,s_{r})$ is Eulerian (zeta-like) if and only if $\mathbf{v}_{\mathfrak{s}}$ is a $\FF_q[t]$- torsion point(respectively, $\mathbf{u}_{\mathfrak{s}}$ and $\mathbf{v}_{\mathfrak{s}}$ have an $\FF_q[t]$-linear relation inside $E'(A)$). The integral points $\mathbf{v}_{\mathfrak{s}}$, $\mathbf{u}_{\mathfrak{s}}$ are constructed using the Anderson-Thakur polynomials. Their theory connects possible $\FF_q(\theta)$-linear relation of $\zeta_A(s_{1},\cdots,s_{r})$ and $\zeta_A(w)$ explicitly with the possible $\FF_q[t]$-linear relation among $\mathbf{v}_{\mathfrak{s}}$ and $\mathbf{u}_{\mathfrak{s}}$ inside $E'(A)$.

Just recently, Kuan-Lin \cite{LK} implemented algorithms basing on the criterion of Chang-Papanikolas-Yu. They have collected more extensive data on zeta-like and Eulerian multizeta values over the polynomial rings $\FF_q[\theta]$. Particularly in \cite{CPY,LRT}, a conjectured rule is spelled out to specify all Eulerian multizeta values. Lists given in \cite{LK} suggest more families of zeta-like multizeta values of arbitrary depth. These families are not covered by \cite{LRT}. It is observed that there should be only a few zeta-like families in higher depth, because of the conjectured "splicing" condition (cf. \cite{LRT}). Finding all zeta-like multizeta values is now in sight.

Inspired by this development we study Anderson-Thakur polynomials in more details in this paper, for the purpose of deriving exact rational ratio between $\zeta_A(s_{1},\cdots,s_{r})$ and $\zeta_A(w)$ whenever such a ratio exists. In particular, we are able to verify : (1) Conjecture 4.6 of \cite{LRT}, (2) Conjecture 5 of \cite{LK}, (3) the conjectured list of all Eulerian multizeta values given in \cite{CPY}, Section 6.2, are indeed Eulerian. The strategy for proving zeta-like property for given multizeta values is to handle recurrence relations among Anderson-Thakur polynomials $H_n$. In view of  the fact that these $H_n$ are polynomials in both $\theta$ and $t$ over $\FF_q$, we use Lucas Theorem to establish $q$-th power recurrence when $n$ has particular $q$-adic \lq\lq shape\rq\rq. Combining with the obvious linear recurrence relating $H_n$ to $H_{n-q^i}$, we eventually arrive at more transparent formulas for $H_n$.

The contents of this paper are arranged as follows.  In Section \ref{NotationandDefintion}, we set up preliminaries and introduce the conjectured families of zeta-like multizeta values given in \cite{LK} and \cite{LRT}, which we will prove later. In Section \ref{zeta-likeAT}, we use generalized Lucas Theorem \cite[p.75-76]{Lucas} to study Anderson-Thakur polynomials. Then in Section \ref{zeta-like} we apply  Chang-Papanikolas-Yu's theorem \cite[Theorem 2.5.2]{CPY} to verify that all previously conjectured families of zeta-like multizeta values are indeed zeta-like with exact formulas given in Theorem \ref{mainthm3}. At the end of this paper we provide `recursive' relations for two very special families of multizeta values and derive that they are Eulerian  (Theorem \ref{mainthm1}, \ref{mainthm2}) in Section \ref{Eulerian}.

\section{Preliminaries for Multizeta values}\label{NotationandDefintion}
\subsection{{\small Notation}} We adopt the notation below in the following chapters.
\begin{longtable}{p{0.2truein}@{\hspace{2pt}$=$\hspace{4pt}}p{4truein}}
$\FF_q$& a finite field with $q=p^m$ elements.\\
$K$& $\FF_q(\theta)$, the rational function field in the variable $\theta$.\\
$\infty$ & $1/\theta$, the infinite place of $K$.\\
$|\ \ |$ & the nonarchimedean absolute value on $K$ corresponding to $\infty$.\\
$K_{\infty}$ & $\FF_q((1/\theta))$, the completion of $K$ with respect to the absolute value $|\cdot|$.\\
$\CC_\infty$ & the completion of $\overline{K_\infty}$ with respect to
the canonical extension of $\infty$.\\
$A$ & $\FF_q[\theta]$, the ring of polynomials in the variable $\theta$.\\
$A_{+}$ & the set of monic polynomials in $A$.\\
$A_{d}$ & the set of polynomials in $A$ of degree $d$.\\
$A_{d+}$ & $A_{d} \cap A_{+}$,the set of monic polynomials in $A$ of degree $d$.\\
$[n]$ & $\theta^{q^n}-\theta$.\\
$D_n$ & $\prod\limits_{i=0}^{n-1} \theta^{q^n}-\theta^{q^i} = [n][n-1]^q\cdots[1]^{q^{n-1}}$.\\
$L_n$ & $\prod\limits_{i=1}^{n} \theta^{q^i}-\theta = [n][n-1]\cdots[1]$.\\
$t$& a variable independent of $\theta$.
\end{longtable}
\subsection{{\small Multizeta values}}
For $s\in \ZZ$ and $d\in \ZZ_{\geq 0}$, put $$S_d(s) = \sum_{a\in A_{d+}} \frac{1}{a^s} \in K.$$

For a given tuple $(s_1,\cdots,s_r) \in \NN^r$ and $d\in \ZZ_{\geq 0}$, put
$$S_d(s_1,\cdots,s_r) = S_d (s_1) \sum_{d>d_2>\cdots>d_r\geq 0} S_{d_2} (s_2)\cdots S_{d_r} (s_r) \in K.$$

For $k\in \ZZ$ the Carlitz-Goss zeta values are defined by
$$\zeta (k) = \sum_{d=0}^{\infty} S_d(k) = \sum_{a\in A_{+}} \frac{1}{a^k} \in K_{\infty}.$$
For a given tuple $(s_1,\cdots,s_r) \in \NN^r$, the Thakur multizeta values of depth $r$ and weight $w=\sum s_i$ are defined by
$$\zeta (s_1,\cdots,s_r) = \sum_{d_1>\cdots>d_r\geq 0} S_{d_1}(s_1)\cdots S_{d_r}(s_r) = \sum_{a_i\in A_{+} \atop  \deg a_1 >\cdots >\deg a_r \geq 0} \frac{1}{a_1^{s_1}\cdots a_r^{s_r}}.$$

\subsection{{\small  Bernoulli-Carlitz numbers $BC(n)$}}
For a non-negative integer $n$, we express $n$ as
$$n=\sum_{i=0}^{\infty} n_{i}q^{i} \quad \textnormal{($0\leq n_{i}\leq q-1$, $n_{i}=0$ for $i\gg 0$)},$$
and we recall the definition of the arithmetic $\Gamma$-function,
$$
\Gamma_{n+1}:=\prod_{i=0}^{\infty} D_{i}^{n_{i}}\in A.
$$
Let $\bf{C}$ be the Carlitz module and $\displaystyle\tilde{\pi} = (-\theta)^{\frac{q}{q-1}}\prod_{i=1}^{\infty} (1-\frac{\theta}{\theta^{q^i}})^{-1}$ be a fundamental period of $\bf{C}$. The Carlitz exponential
function is defined by $\exp_{\bf{C}}(z) = \sum_{n\geq 0} \displaystyle \frac{z^{q^n}}{D_n}$. The Bernoulli-Carlitz numbers $BC(n)\in K$ defined by
$$\displaystyle\frac{z}{\exp_{\bf{C}}(z)} = \sum_{n\geq 0} \displaystyle\frac{BC(n)}{\Gamma_{n+1}}z^n.$$
\\
When $n$ is `even' i.e., $q-1 | n$, Carlitz derived an analogue of Euler's formula as follows:
\begin{lemma}\label{bernoulli} $($Carlitz$)$
\\
$($a$)$ $\zeta_A(n) = \displaystyle\frac{BC(n)}{\Gamma_{n+1}}\widetilde{\pi}^n$ if $q-1 | n$.
\\
$($b$)$ $BC(q^n-q^i) = \displaystyle\frac{(-1)^{n-i}\Gamma_{q^n-q^i+1}}{L_{n-i}^{q^i}}$.

\end{lemma}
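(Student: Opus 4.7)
The plan is to handle (a) and (b) in turn, deducing (b) from (a) via the Frobenius endomorphism on $\CC_\infty$ together with the classical evaluation of $\zeta_A(q^k-1)$.

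For (a), I would start from the Weierstrass product
\[
\exp_{\mathbf{C}}(z) \;=\; z \prod_{a \in A\setminus\{0\}}\bigl(1-z/(\widetilde{\pi}a)\bigr),
\]
available because $\exp_{\mathbf{C}}$ is $\FF_q$-linear and entire with kernel $\widetilde{\pi}A$. Taking the logarithmic derivative and using the characteristic-$p$ identity $\exp_{\mathbf{C}}'(z)=1$ (only the linear term of $\sum_{n\geq 0}z^{q^n}/D_n$ contributes) yields
\[
\frac{1}{\exp_{\mathbf{C}}(z)} \;=\; \frac{1}{z} - \sum_{a\neq 0}\frac{1}{\widetilde{\pi}a-z}.
\]
Multiplying by $z$ and expanding each summand geometrically gives
\[
\frac{z}{\exp_{\mathbf{C}}(z)} \;=\; 1 - \sum_{k\geq 1}\frac{z^k}{\widetilde{\pi}^k}\sum_{a\neq 0}\frac{1}{a^k}.
\]
Factoring $a=cm$ with $c\in\FF_q^*$ and $m\in A_+$, the character sum $\sum_{c\in\FF_q^*}c^{-k}$ equals $-1\in\FF_q$ when $q-1\mid k$ and vanishes otherwise, so $\sum_{a\neq 0}a^{-k} = -\zeta_A(k)$ in the first case and $0$ in the second. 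Comparing coefficients with the defining expansion $z/\exp_{\mathbf{C}}(z)=\sum_{n\geq 0}BC(n)z^n/\Gamma_{n+1}$ reads off (a) (and also forces $BC(n)=0$ when $q-1\nmid n$, $n\geq 1$).

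For (b), the extra ingredient is the Frobenius relation $\zeta_A(mq)=\zeta_A(m)^q$, which holds because $x\mapsto x^q$ is a continuous ring endomorphism of $\CC_\infty$ that may be interchanged with the convergent series defining $\zeta_A$. Iterating $i$ times with $m=q^{n-i}-1$ gives
\[
\zeta_A(q^n-q^i) \;=\; \zeta_A(q^{n-i}-1)^{q^i}.
\]
Then invoke Carlitz's evaluation
\[
\zeta_A(q^k-1) \;=\; (-1)^k\,\widetilde{\pi}^{q^k-1}/L_k,
\]
which itself can be derived from the same generating-function analysis: the coefficient of $z^{q^k-1}$ in $\bigl(1+\sum_{r\geq 1}z^{q^r-1}/D_r\bigr)^{-1}$ is an alternating sum over compositions $q^k-1=\sum_l(q^{m_l}-1)$, and the identities $[k]D_{k-1}^q=D_k$ and $[k]L_{k-1}=L_k$ allow one to induct on $k$ and recover the closed form with the correct sign. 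Raising to the $q^i$-th power and substituting into $BC(q^n-q^i)=\Gamma_{q^n-q^i+1}\,\zeta_A(q^n-q^i)/\widetilde{\pi}^{q^n-q^i}$ (from (a), noting that $q-1\mid q^n-q^i$) produces (b).

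The main obstacle is the sign bookkeeping in the evaluation of $\zeta_A(q^k-1)$: identifying which multi-indices $(m_1,\dots,m_j)$ contribute to the coefficient of $z^{q^k-1}$ and showing that the resulting sums telescope to $(-1)^k/L_k$ via the recursions for $D_k$, $L_k$ together with identities such as $[1]^q=[2]-[1]$. Once this is in place, the reduction to the $i=0$ case via Frobenius and the substitution into (a) are purely formal.
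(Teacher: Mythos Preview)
The paper does not supply a proof of this lemma; it is stated with the attribution ``(Carlitz)'' and treated as classical, with reference to \cite{Ca}. So there is no paper proof to compare against.

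Your argument is correct and is essentially Carlitz's original one. Part (a) is exactly the standard derivation via the Weierstrass product for $\exp_{\mathbf{C}}$, the logarithmic derivative (using $\exp_{\mathbf{C}}'=1$), and the orthogonality relation $\sum_{c\in\FF_q^\times}c^{-k}=-[q{-}1\mid k]$. For part (b) your reduction to the case $i=0$ via the Frobenius identity $\zeta_A(mq)=\zeta_A(m)^q$ is clean; note that $(-1)^{(n-i)q^i}=(-1)^{n-i}$ holds in all characteristics, so the sign comes out right. The one place where you leave work on the table is the evaluation $\zeta_A(q^k-1)=(-1)^k\widetilde{\pi}^{\,q^k-1}/L_k$, equivalently $BC(q^k-1)/\Gamma_{q^k}=(-1)^k/L_k$. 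Rather than enumerating compositions in $(1+\sum_{r\ge 1}z^{q^r-1}/D_r)^{-1}$, it is quicker to observe that the Carlitz logarithm $\log_{\mathbf{C}}(z)=\sum_{k\ge 0}(-1)^k z^{q^k}/L_k$ satisfies $\log_{\mathbf{C}}'=1$, whence differentiating $\log_{\mathbf{C}}(\exp_{\mathbf{C}}(z))=z$ gives $z/\exp_{\mathbf{C}}(z)=z\cdot\log_{\mathbf{C}}'(\exp_{\mathbf{C}}(z))$; but more directly, the recursions $D_k=[k]D_{k-1}^q$ and $L_k=[k]L_{k-1}$ let you verify by induction that the coefficients $c_k$ of $z^{q^k-1}$ in $z/\exp_{\mathbf{C}}(z)$ satisfy $c_k=-c_{k-1}^q/[k]$ with $c_0=1$, giving $c_k=(-1)^k/L_k$ immediately. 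Either way your outline closes without difficulty.
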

\
\\
Combining ($a$), ($b$) we get $\zeta_A(q^n-1) = \displaystyle\frac{(-1)^n}{L_n}\widetilde{\pi}^{q^n-1}$.

\subsection{{\small Anderson-Thakur polynomials}}
First we define polynomials $G_i\in \FF_q [t,\theta]$ for $i\in \NN\cup \{0\}$. Put $G_0=1$.
For $i\in \NN$, let
$$G_i= \prod\limits_{j=1}^{i} (t^{q^i}-\theta^{q^j}). $$
For $n=0,1,2,\ldots$ we define the sequence of Anderson-Thakur polynomials $H_{n}\in A[t]$ by the generating function identity
\[
\left( 1-\sum_{i=0}^{\infty} \frac{ G_{i} }{ D_{i}|_{\theta=t}} x^{q^{i}}  \right)^{-1}=\sum_{n=0}^{\infty} \frac{H_{n}}{\Gamma_{n+1}|_{\theta=t}} x^{n}.
\]
We note that for $0\leq n\leq q-1$ we have $H_{n}=1$. The following two identities follows from the above definition:
\\
\\\
$\begin{aligned}
&(a)\ \sum_{n=0}^{\infty}  \frac{H_{n}}{\Gamma_{n+1}|_{\theta=t}} x^{n} = \sum_{m\geq 0}(\sum_{i=0}^{\infty} \frac{ G_{i} }{ D_{i}|_{\theta=t}} x^{q^{i}})^m,\\
&(b)\ (1-\sum_{i=0}^{\infty} \frac{ G_{i} }{ D_{i}|_{\theta=t}} x^{q^{i}})  (\sum_{n=0}^{\infty}  \frac{H_{n}}{\Gamma_{n+1}|_{\theta=t}} x^{n})=1,
\end{aligned}$
\\
\\
For any infinite vector $\underline{a}=( a_0, a_1, a_2, \cdots )$ with integers $a_i\geq 0$ and $a_j=0$ for $j\gg 0$, put
$m(\underline{a})$:= last index $i$ such that $a_i \ne 0$.
We define $C_{\underline{a}} := \displaystyle\frac{(a_0+\cdots+a_{m(\underline{a})})!}{a_0!\cdots a_{m(\underline{a})}!}$.
For $n\in \NN$, a $q$ power weighted partition is an infinite vector $\underline{a}$ satisfying $n=\sum_{i=0}^{\infty} a_i q^i$.
We have the following lemma giving two ways for explicitly writing Anderson--Thakur polynomials:
\\
\begin{lemma}\label{inductionhn}
\
\\
$($a$)$
For $n\in\NN$, let $S_n = \{\underline{a}\ |\ n= \sum a_iq^i,\ C_{\underline{a}}\not\equiv 0 \mod p\}$ denote the set of all possible $q$ power weighted partition of $n$ with nonzero
$C_{\underline{a}} \mod p$.
Then $$\frac{H_{n}}{\Gamma_{n+1}|_{\theta=t}} = \sum_{\underline{a}\in S_n} C_{\underline{a}} \prod_{i=0}^{\infty} (\frac{G_i}{D_i|_{\theta=t}})^{a_i}.$$
\\
$($b$)$ For $n\in\NN$, $$\frac{H_{n}}{\Gamma_{n+1}|_{\theta=t}} = \sum_{i=0}^{[log_q n]} \frac{G_i}{D_i|_{\theta=t}}\frac{H_{n-q^i}}{\Gamma_{n-q^i+1}|_{\theta=t}}.$$
\end{lemma}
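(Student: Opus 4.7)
Both parts will follow directly from the two generating function identities (a) and (b) stated immediately before the lemma. My plan is to prove part (a) by expanding the geometric series and applying the multinomial theorem, and part (b) by equating coefficients of $x^n$ in the reciprocal identity.

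For part (a), I start from the identity
\[
\sum_{n\geq 0} \frac{H_n}{\Gamma_{n+1}|_{\theta=t}}\, x^n \;=\; \sum_{m\geq 0}\Bigl(\sum_{i\geq 0} \frac{G_i}{D_i|_{\theta=t}}\, x^{q^i}\Bigr)^m.
\]
Applying the multinomial theorem to the inner $m$-th power yields
\[
\Bigl(\sum_i \frac{G_i}{D_i|_{\theta=t}}\, x^{q^i}\Bigr)^m \;=\; \sum_{\underline{a}:\, \sum_i a_i = m} \frac{m!}{a_0!\, a_1!\cdots}\prod_i \Bigl(\frac{G_i}{D_i|_{\theta=t}}\Bigr)^{a_i} x^{\sum_i a_i q^i}.
\]
Summing over $m\geq 0$ and collecting the coefficient of $x^n$, which forces $\sum_i a_i q^i = n$ and identifies $m = \sum_i a_i$, produces the asserted formula with the multinomial coefficient equal to $C_{\underline{a}}$. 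Because we work in characteristic $p$, the contributions with $C_{\underline{a}}\equiv 0\pmod p$ vanish, so the sum may be restricted to $S_n$ as stated.

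For part (b), I take the identity
\[
\Bigl(1-\sum_{i\geq 0} \frac{G_i}{D_i|_{\theta=t}}\, x^{q^i}\Bigr)\Bigl(\sum_{n\geq 0} \frac{H_n}{\Gamma_{n+1}|_{\theta=t}}\, x^n\Bigr) = 1
\]
and read off the coefficient of $x^n$ for $n\geq 1$. The right-hand side contributes $0$, while the left-hand side contributes
\[
\frac{H_n}{\Gamma_{n+1}|_{\theta=t}} \;-\; \sum_{i:\, q^i\leq n} \frac{G_i}{D_i|_{\theta=t}}\cdot \frac{H_{n-q^i}}{\Gamma_{n-q^i+1}|_{\theta=t}};
\]
the condition $q^i \leq n$ is precisely $i\leq [\log_q n]$, yielding the claimed linear recurrence. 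Since both statements are coefficient-by-coefficient transcriptions of generating-function identities the paper has already recorded, no serious obstacle arises — the only subtlety is the restriction of the sum in (a) from all $q$-power weighted partitions of $n$ down to $S_n$, which reduces to the observation that $A[t]$ has characteristic $p$.
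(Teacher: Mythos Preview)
Your proposal is correct and is exactly the argument the paper has in mind: the paper does not write out a proof of this lemma, but it records identities (a) and (b) immediately before the statement precisely so that the two parts follow by the multinomial expansion and by comparing coefficients of $x^n$, just as you do. There is nothing to add.
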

\
\\
We will discuss more details about Anderson-Thakur polynomials in Section \ref{zeta-likeAT}.

\subsection{Revisiting Lucas Theorem}
To compute $C_{\underline{a}}$$\mod p$, a useful tool is a generalization of the Lucas Theorem.
\begin{theorem} \label{Lucas}$($ Dickson \cite{Lucas} $)$
For any infinite vector $\underline{a}$, $C_{\underline{a}}\not\equiv 0 \mod p$ if and only if
there is no carrying in computing the sum $a_1+\cdots+a_{m(\underline{a})}$ in terms of base $p$ expansion. Furthermore, if
$\sum a_i = \sum_{j=0}^{m} n_jp^j$, $a_i = \sum_{j=0}^{m} n_{i,j}p^j$. with $0\leq n_{j}, n_{i,j}\leq p-1$. Then  $C_{\underline{a}} \equiv \prod C_{\underline{n_j}}$ in $\FF_p$, where $\underline{n_j} = ( n_{0,j},\cdots,n_{m(\underline{a}),j},0,0,\cdots )$.
\end{theorem}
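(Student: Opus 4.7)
The plan is to interpret $C_{\underline{a}}$ as the multinomial coefficient $\binom{N}{a_0,\ldots,a_m}$ with $N=\sum_i a_i$ and $m=m(\underline{a})$, so that $C_{\underline{a}}$ appears as the coefficient of $\prod_i x_i^{a_i}$ in the expansion of $(x_0+\cdots+x_m)^N$, and then to compute this coefficient modulo $p$ using the characteristic-$p$ Frobenius identity $(y_0+\cdots+y_m)^{p^j}\equiv y_0^{p^j}+\cdots+y_m^{p^j}\pmod p$. Writing $N=\sum_j n_j p^j$ in base $p$ and grouping factors according to this expansion, the first step is to derive the central identity
\[
(x_0+\cdots+x_m)^N \ \equiv\ \prod_{j\ge 0}\bigl(x_0^{p^j}+\cdots+x_m^{p^j}\bigr)^{n_j}\pmod p,
\]
which decouples the mod-$p$ multinomial arithmetic across the base-$p$ digits of $N$.

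Next I would expand the right-hand side factor-by-factor by the multinomial theorem: each factor contributes a sum over tuples $(n_{0,j},\ldots,n_{m,j})$ with $\sum_i n_{i,j}=n_j$ of monomials $\prod_i x_i^{n_{i,j}p^j}$ weighted by $C_{\underline{n_j}}$. Matching the monomial $\prod_i x_i^{a_i}$ on both sides then requires choosing nonnegative integers $n_{i,j}$ satisfying $\sum_j n_{i,j}p^j=a_i$ for each $i$ and $\sum_i n_{i,j}=n_j$ for each $j$; the resulting coefficient on the right is the sum of $\prod_j C_{\underline{n_j}}$ over all admissible tuples.

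The critical observation I would then exploit is that the bound $n_{i,j}\le n_j\le p-1$ is automatic, so by uniqueness of the base-$p$ expansion the only way to realize $\sum_j n_{i,j}p^j=a_i$ is to take $n_{i,j}$ to be the $j$-th base-$p$ digit of $a_i$. The compatibility requirement $\sum_i n_{i,j}=n_j$ then says precisely that the $j$-th digit of $\sum_i a_i$ equals the sum of the $j$-th digits of the $a_i$, i.e.\ that \emph{no carries occur} in the base-$p$ addition $a_0+\cdots+a_m$. If carrying does occur no admissible tuple exists, forcing $C_{\underline{a}}\equiv 0\pmod p$; if not, the unique admissible tuple yields $C_{\underline{a}}\equiv \prod_j C_{\underline{n_j}}\pmod p$, and each factor is nonzero in $\FF_p$ because all of its parts are bounded by $p-1$.

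The argument is essentially combinatorial bookkeeping; the only real care is in keeping the double-indexed digits $n_{i,j}$ consistent across the two viewpoints (as the $j$-th base-$p$ digit of $a_i$ versus as a part summing to $n_j$), and in verifying that the bound $n_{i,j}\le p-1$ is indeed automatic so that the base-$p$ uniqueness applies without an extra hypothesis. I do not foresee a substantive obstacle beyond this indexing.
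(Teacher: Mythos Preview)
Your argument is correct; this is the standard generating-function proof of the multinomial Lucas theorem, and the bookkeeping you outline (in particular the observation that $n_{i,j}\le n_j\le p-1$ forces the $n_{i,j}$ to be the base-$p$ digits of $a_i$) is exactly what makes it work. The paper itself does not give a proof of this statement at all---it simply refers the reader to Dickson \cite[pp.~75--76]{Lucas}---so there is no in-paper argument to compare against; your proposal supplies precisely the classical proof that the citation points to.
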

\begin{proof}
See \cite[p.75-76]{Lucas}.
\end{proof}
\
\\
By Theorem \ref{Lucas} we see that $C_{\underline{a}}$$\mod p$ can be computed as digits in base $p$ expansion separately. So we try to descend $H_n$ via the maps below. For simplicity we view $C_{\underline{a}}$ as elements in $\FF_p$.

\begin{definition} For any infinite vector $\underline{a}$ with $C_{\underline{a}}\ne 0$, let $\tilde{\underline{a}}= (\tilde{a_0}, \tilde{a_1}, \cdots )$, where $a_i \equiv \widetilde{a_i} \mod q$ with $0 \leq \widetilde{a_i}\leq q-1$.  We define the following `reduction map' of vectors.
\\
$$\tilde{\tilde{\underline{a}}}:= (\frac{a_0-\tilde{a_0}}{q}, \frac{a_1-\tilde{a_1}}{q}, \cdots ).$$
\end{definition}
\
\\
By Theorem \ref{Lucas} we see that
\begin{lemma}
$C_{\underline{a}} = C_{\tilde{\underline{a}}}C_{\tilde{\tilde{\underline{a}}}}$.
\end{lemma}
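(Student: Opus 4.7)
The plan is to apply the generalized Lucas Theorem (Theorem \ref{Lucas}) to both sides of the desired identity and observe that the product factorization of $C_{\underline{a}}$ over base-$p$ digit positions splits naturally at position $j = m$ into contributions from $\tilde{\underline{a}}$ (the low $m$ digits) and $\tilde{\tilde{\underline{a}}}$ (the remaining digits).

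First I would fix the base-$p$ expansion $a_i = \sum_{j \geq 0} n_{i,j} p^j$ with $0 \leq n_{i,j} \leq p-1$. Since $q = p^m$, the congruence $a_i \equiv \tilde{a_i} \pmod{q}$ with $0 \leq \tilde{a_i} \leq q-1$ forces
\[
\tilde{a_i} = \sum_{j=0}^{m-1} n_{i,j}\, p^j, \qquad \frac{a_i - \tilde{a_i}}{q} = \sum_{j \geq 0} n_{i, j+m}\, p^j.
\]
Hence the base-$p$ digits of the $i$-th entry of $\tilde{\underline{a}}$ are $n_{i,j}$ for $0 \leq j < m$ and zero for $j \geq m$, while the base-$p$ digits of the $i$-th entry of $\tilde{\tilde{\underline{a}}}$ at position $j$ are precisely $n_{i,j+m}$.

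Next I would invoke Theorem \ref{Lucas} applied to $C_{\underline{a}}$ and split the resulting product at $j = m$:
\[
C_{\underline{a}} = \prod_{j \geq 0} C_{\underline{n_j}} = \Bigl( \prod_{0 \leq j < m} C_{\underline{n_j}} \Bigr) \Bigl( \prod_{j \geq m} C_{\underline{n_j}} \Bigr)
\]
in $\FF_p$, where $\underline{n_j} = (n_{0,j}, n_{1,j}, \ldots)$. Applying Theorem \ref{Lucas} once more, this time to $\tilde{\underline{a}}$ and to $\tilde{\tilde{\underline{a}}}$ separately, identifies the two inner products with $C_{\tilde{\underline{a}}}$ and $C_{\tilde{\tilde{\underline{a}}}}$ respectively (the base-$p$ digits of $\tilde{\underline{a}}$ occupy only positions $0, \ldots, m-1$ and reproduce the $\underline{n_j}$ with $j < m$; the base-$p$ digits of $\tilde{\tilde{\underline{a}}}$ at position $j$ reproduce the $\underline{n_{j+m}}$), yielding the claimed identity $C_{\underline{a}} = C_{\tilde{\underline{a}}}\, C_{\tilde{\tilde{\underline{a}}}}$.

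The only point that warrants a brief check is that Theorem \ref{Lucas} applies legitimately to $\tilde{\underline{a}}$ and $\tilde{\tilde{\underline{a}}}$ individually, i.e.\ that no carrying occurs when summing their entries in base $p$. This is automatic from the hypothesis $C_{\underline{a}} \neq 0$: since no carry arises at any position when computing $\sum_i a_i$ in base $p$, a fortiori no carry arises within either of the two disjoint blocks of digit positions $\{0, \ldots, m-1\}$ and $\{m, m+1, \ldots\}$. There is thus no real obstacle here — the lemma is essentially a bookkeeping statement that the base-$p$ digit decomposition underlying Lucas's theorem respects the ``split at position $m$'' that defines the reduction map $\underline{a} \mapsto (\tilde{\underline{a}}, \tilde{\tilde{\underline{a}}})$.
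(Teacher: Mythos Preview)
Your proof is correct and follows exactly the approach the paper intends: the paper simply writes ``By Theorem \ref{Lucas} we see that'' and states the lemma, leaving the digit-splitting verification to the reader, and you have carried out precisely that verification. Your observation that the no-carry hypothesis for $\underline{a}$ is inherited by each block $\{0,\ldots,m-1\}$ and $\{m,m+1,\ldots\}$ is the only point needing comment, and you have handled it correctly.
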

\
\\
\subsection{{\small Binomial series to the Carlitz module}}\label{binomialseries}
For $k\in\ZZ_{\geq 0}$, let $\Psi_k(u)$ be the polynomials in $K[u]$ defined by $$\exp_C(u\log_C(x))= \sum_{k\geq 0} \Psi_k(u) x^{q^k}.$$ The polynomials $\Psi_k(x)$ are analogues to the classical binomial series $$\left(
                                                                                                                   \begin{array}{c}
                                                                                                                     x \\
                                                                                                                     n \\
                                                                                                                   \end{array}
                                                                                                                 \right)
\in \QQ[x]$$ to the multiplicative group, which are defined by
$$(1+t)^x = \exp(x\log (1+t)) = \sum_{n=0}^{\infty}\left(
                                                                                                                   \begin{array}{c}
                                                                                                                     x \\
                                                                                                                     n \\
                                                                                                                   \end{array}
                                                                                                                 \right) t^n.$$

\begin{proposition}\label{ATbinomial}$($Anderson-Thakur \cite{AT}$)$
$$\Psi_k(x)= \sum_{i=0}^{k} \frac{\prod_{j=1}^{i}(\theta^{q^i}-\theta^{q^{k+j}})}{D_i}(\frac{x}{(-1)^kL_k})^{q^i}.$$ Moreover,
$\Psi_k(a)= 0$ for all $a\in \FF_q[\theta]$ with $\deg_{\theta} a < k$ and $\Psi_k(\theta^k)= 1$.
\end{proposition}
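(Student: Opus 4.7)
The plan is to handle the two assertions separately: first I would expand the defining identity $\exp_C(u\log_C(x))=\sum_{k\geq 0}\Psi_k(u)x^{q^k}$ as a double series in $x$ to extract the closed form, and then I would deduce the vanishing and the normalization at $u=\theta^k$ from the standard intertwining $\exp_C(az)=C_a(\exp_C(z))$ valid for all $a\in A$, where $C_a$ denotes the Carlitz action of $a$.

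For the first half, I would begin by identifying the Carlitz logarithm as $\log_C(x)=\sum_{n\geq 0}c_n x^{q^n}$ with $c_0=1$ and the $c_n$ determined by the functional equation $\log_C(C_\theta(x))=\theta\log_C(x)$; comparing coefficients with $C_\theta(x)=\theta x+x^q$ yields the recursion $c_n(\theta^{q^n}-\theta)=-c_{n-1}$, hence $c_n=(-1)^n/L_n$. Substituting into $\exp_C(u\log_C(x))=\sum_{m\geq 0}(u\log_C(x))^{q^m}/D_m$, distributing the $q^m$-power Frobenius on the inner series, and collecting the coefficient of $x^{q^k}$ yields
\[
\Psi_k(u)=\sum_{i=0}^{k}\frac{(-1)^{k-i}\,u^{q^i}}{D_i\,L_{k-i}^{q^i}}.
\]
To turn this into the product form stated in the proposition, I would apply the Frobenius identity $[n]^{q^i}=\theta^{q^{n+i}}-\theta^{q^i}$ to compute $(L_k/L_{k-i})^{q^i}=\prod_{j=1}^{i}(\theta^{q^{k+j}}-\theta^{q^i})=(-1)^i\prod_{j=1}^{i}(\theta^{q^i}-\theta^{q^{k+j}})$, and substituting back simplifies to exactly the displayed formula.

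For the second half, I would use that for every $a\in A$ the Carlitz action $C_a$ is an $\FF_q$-linear polynomial $C_a(x)=\sum_{k=0}^{\deg_\theta a}c_k(a)\,x^{q^k}$; in particular the $x^{q^k}$-coefficient vanishes as soon as $k>\deg_\theta a$, and when $a$ is $\theta$-monic of degree $k$ the leading coefficient $c_k(a)$ equals $1$. Specializing $z=\log_C(x)$ in $\exp_C(az)=C_a(\exp_C(z))$ and using $\exp_C(\log_C(x))=x$ gives the formal-series identity $\exp_C(a\log_C(x))=C_a(x)$, so matching with the definition of $\Psi_k$ at $u=a$ yields $\Psi_k(a)=c_k(a)$. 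Hence $\Psi_k(a)=0$ whenever $\deg_\theta a<k$, and $\Psi_k(\theta^k)=1$ because $C_{\theta^k}$ is the $k$-fold composition of $C_\theta(x)=\theta x+x^q$, whose $x^{q^k}$-coefficient equals $1$.

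The only genuine (and rather mild) obstacle is the sign bookkeeping in the first half: one must verify that $(-1)^{k-i}$ from the logarithm expansion, $(-1)^i$ from the rewriting of the $[n]^{q^i}$-product, and $(-1)^{kq^i}$ hidden in the denominator $((-1)^k L_k)^{q^i}$ combine consistently for both odd and even characteristic. All remaining manipulations are routine formal power series computations together with elementary properties of the Carlitz module.
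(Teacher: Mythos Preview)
The paper does not give its own proof of this proposition; it is quoted from Anderson--Thakur \cite{AT} and used as a black box in the proof of Theorem~\ref{mainproposition}. There is therefore nothing to compare against, but your argument is correct and self-contained: the coefficient extraction from $\exp_C(u\log_C(x))$ is carried out correctly (the Frobenius passes through the inner sum termwise), the rewriting of $(L_k/L_{k-i})^{q^i}$ via $[n]^{q^i}=\theta^{q^{n+i}}-\theta^{q^i}$ is right, and the sign reconciliation reduces to $k(1+q^i)\equiv 0\pmod 2$, which holds for all $q$. For the second half, specializing $z=\log_C(x)$ in $\exp_C(az)=C_a(\exp_C(z))$ to obtain $\sum_k\Psi_k(a)x^{q^k}=C_a(x)$ is exactly the intended mechanism, and both the vanishing for $\deg_\theta a<k$ and the value $\Psi_k(\theta^k)=1$ follow immediately from the shape of $C_a$.
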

\
\\
This result is another key tool in the proof of Theorem \ref{mainproposition}. For our purpose, we replace $\theta$ by $t$ in Anderson-Thakur's result so that $\Psi_k(x)|_{\theta=t} \in \FF_q(t)[x]$.
\\
\subsection{{\small Conjectures on Eulerian/Zeta-like Multizeta Values}}
There are families of zeta-like multizeta values of arbitrary depth, for instance, in \cite{LRT}, they showed that
for any $q$,
$\zeta_A(1,q-1,(q-1)q, \cdots, (q-1)q^n )$ is zeta-like by giving the ratio of it to $\zeta_A(q^{n+1})$.
There are certainly more families of zeta-like multizeta values of arbitrary depth, the following conjecture is given by \cite{LRT}:

\begin{conjecture}\ \label{LRTconj}
\\
\\
$($a$)$ For any $q,n \geq 1$ and $r\geq 2$,  $$\zeta_{A}(q^n-1, (q-1)q^n, \cdots, (q-1)q^{n+r-2} )
=\displaystyle\frac{[n+r-2][n+r-3]\cdots[n]}{[1]^{q^{n+r-2}}[2]^{q^{n+r-3}}\cdots [r-1]^{q^{n}}}\zeta_{A}(q^{n+r-1}-1).$$
\\
$($b$)$ For any $q,n \geq 0$, $$\zeta_{A}(1, q^2-1, (q-1)q^2, \cdots, (q-1)q^{n+1} )
=\displaystyle\frac{[n+2]-1}{l_1[n+2]}\frac{1}{l_1^{(q-1)q^{n}}l_2^{(q-1)q^{n-1}}\cdots l_{n-1}^{(q-1)q^{2}}l_n^{q^2}}\zeta_{A}(q^{n+2}),$$ where
$l_i:=(-1)^iL_i$.
\\
\\
$($c$)$ For $q>2$, $n\geq 0$ and $r\geq 2$,
$$\zeta_A((q-1)q^n-1, (q-1)q^{n+1},\cdots,(q-1)q^{n+r-1}) = \displaystyle\frac{(-1)^{r+1}[n+r-1][n+r-2]\cdots [n+1]}{[1]^{(q^{r-1}-1)q^n} [2]^{(q^{r-2}-1)q^n} \cdots [r-1]^{(q-1)q^n}}\zeta_A(q^{n+r}-q^n-1).$$
\end{conjecture}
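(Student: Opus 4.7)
The plan is to apply the Chang-Papanikolas-Yu criterion recalled in the introduction: for a tuple $\mathfrak{s} = (s_1, \ldots, s_r)$ of weight $w$, the value $\zeta_A(\mathfrak{s})$ is zeta-like exactly when the integral points $\mathbf{u}_{\mathfrak{s}}, \mathbf{v}_{\mathfrak{s}}$ attached to $\mathfrak{s}$ admit an $\FF_q[t]$-linear relation inside $E'(A)$, and the coefficient of that relation supplies the rational ratio $\zeta_A(\mathfrak{s})/\zeta_A(w)$. Since $\mathbf{u}_{\mathfrak{s}}$ and $\mathbf{v}_{\mathfrak{s}}$ are built from the Anderson-Thakur polynomials $H_{s_i - 1}$ via the Carlitz logarithm, proving each of (a), (b), (c) reduces to producing workable closed forms for the $H_{s_i - 1}$ that occur there, and then inverting those forms through Anderson-Thakur's binomial series $\Psi_k$ of Proposition \ref{ATbinomial}, whose vanishing $\Psi_k(a) = 0$ for $\deg_\theta a < k$ and normalization $\Psi_k(\theta^k) = 1$ turn the identification of a linear relation into a coordinate-by-coordinate polynomial matching.

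The decisive feature of the conjectured families is the rigid base-$q$ expansion of each $s_i - 1$: long blocks of the digit $q-1$ with at most one exceptional position, namely a digit $q-2$ in the lowest slot for (a) and (c), and a digit $0$ in the second slot for (b). Under Lemma \ref{inductionhn}(a) combined with Theorem \ref{Lucas}, such $q$-adic shapes force only a small subset of partitions $\underline{a} \in S_n$ to survive modulo $p$. First I would combine this with the descent $\underline{a} \mapsto \tilde{\tilde{\underline{a}}}$ to produce a $q$-th power recurrence for $H_n / \Gamma_{n+1}|_{\theta=t}$ valid on these restricted shapes of $n$, of the rough form ``$H_{qn+c}$ equals a Frobenius twist of $H_n$, multiplied by $H_c$ and by explicit products of $G_i$ and $D_i$''. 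Iterating it, and using the linear recurrence Lemma \ref{inductionhn}(b) to peel off the anomalous digit, then yields closed expressions for every $H_{s_i - 1}$ appearing in the three conjectures.

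Feeding those closed forms into the CPY construction translates the desired linear relation between $\mathbf{u}_{\mathfrak{s}}$ and $\mathbf{v}_{\mathfrak{s}}$ into an explicit polynomial identity in $\FF_q(\theta)[t]$. Applying $\Psi_k$ coordinatewise, together with Lemma \ref{bernoulli} in the form $\zeta_A(q^n - 1) = (-1)^n \tilde{\pi}^{q^n - 1}/L_n$, collapses that identity to the statement that a specific scalar in $\FF_q(\theta)$ equals the ratio appearing on the right-hand side of (a), (b), or (c); what remains is then an exercise in cancellation of the $[i]$, $L_i$, $D_i$, and $G_i$ by telescoping.

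The main obstacle I anticipate is precisely this last scalar bookkeeping. The iterated $q$-th power recurrence produces a tangle of Frobenius twists together with factors $[i]$ and $L_i^{q^j}$, and threading them through the $\Psi_k$-inversion has to return exactly the numerator and denominator structures prescribed on the right-hand sides of (a), (b), (c). The three families run in parallel, but each places its anomalous digit at a different position, so the recurrences must be set up separately in each case; in particular, the sign $(-1)^{r+1}$ in (c), which is controlled by the interaction between the digit $q-2$ and $C_{\underline{a}} \bmod p$, will require the most delicate tracking.
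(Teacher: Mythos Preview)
Your plan matches the paper's approach for part~(c) and for part~(b) when $q>2$: there the weight $w$ is not divisible by $q-1$, the CPY difference-equation system (Theorem~\ref{CPY}) applies with its ``furthermore'' clause, and the closed forms for $H_{s_i-1}$ coming from Theorem~\ref{mainproposition} let you solve the system explicitly (this is Proposition~\ref{mainproposition2} and Theorem~\ref{mainthm3}). One small correction: the binomial series $\Psi_k$ are used inside the proof of the $H_n$ formula itself, not in the verification of the CPY system; once you have the $H_{s_i-1}$ in hand, checking the system is direct polynomial algebra in $t$ and $\theta$.

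The genuine gap is part~(a), and also part~(b) when $q=2$. In~(a) the weight $q^{n+r-1}-1$ is always divisible by $q-1$, so these are Eulerian multizeta values; the ratio formula in Theorem~\ref{CPY} is stated only for $q-1\nmid w$, and solving the difference system in that case tells you the value is Eulerian but does not by itself hand you the explicit coefficient $\prod[n+j]\big/\prod[j]^{q^{\cdots}}$. The paper does \emph{not} go through CPY here at all. Instead it proves an elementary recursive identity among the multizeta values themselves (Theorem~\ref{mainthm1}), obtained by expanding the product $\zeta_A(q^n-1)\,\zeta_A((q-1)q^n,\ldots,(q-1)q^{n+r-2})$ and using the power-sum relation $S_d(q^n-1)S_d((q-1)q^n)=S_d(q^{n+1}-1)-S_d((q-1)q^n,q^n-1)$ of Lemma~\ref{main lemma1}; the explicit ratio then drops out by induction on $r$ together with Carlitz's evaluation $\zeta_A(q^m-1)=(-1)^m\tilde\pi^{\,q^m-1}/L_m$. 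The same style of argument handles~(b) for $q=2$ (Theorem~\ref{mainthm2}). Your uniform CPY plan would need either an Eulerian refinement of Theorem~\ref{CPY} that outputs the ratio to $\tilde\pi^w$, or else this separate power-sum route for the even-weight cases.
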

\
\\
\begin{remark} In \cite{LRT} Conjecture \ref{LRTconj} is proved in the depth $2$ case. We refer to
\cite{LRT} for more details, in particular \cite[Theorem $3.1$]{LRT}, where many depth $2$ zeta-like multizeta values are given with precise ratio to $\zeta_A(w)$.
\end{remark}

Basing on Chang-Papanikolas-Yu criterion for Eulerian/zeta-like in \cite{CPY}, Kuan-Lin \cite{LK} wrote an algorithm and tested multizeta values with bounded weights and depths by computer. From their output data, they gave another more extensive conjecture about zeta-like families of arbitrary depth and also specific depth $3$ zeta-like multizeta values.
\\
\begin{conjecture}$($ Kuan-Lin-Yu $)\label{LKconj}$ Suppose that $q>2$. Then we have the following families of zeta-like multizeta values:
\\
\\
$($a$)$ For $q=p^l>2$, $1\leq p^m\leq q$, $n>0$ and $r\geq 2$, consider $N_i\in \NN \cup \{0\}$ for $0\leq i \leq n-1$ such that  $1\leq \sum N_i \leq q-1$.
 If $(q-1)(q^n-\sum N_iq^i)\leq p^m(q-1)q^{n-1}$, then
$\zeta_A(q^n-\sum N_iq^i,p^m(q-1)q^{n-1},\cdots, p^m(q-1)q^{n+r-3})$ is zeta-like. In particular $\zeta_A(1,p^m(q-1),p^mq(q-1),\cdots,p^mq^{r-2}(q-1))$ is zeta-like.
\\
\\
$($b$)$ In the case of depth $r=3$,
$$\zeta_A(1,q(q-1),q^3-q^2+q-1)= \frac{[3]-1}{[3][2][1]^{q^2-q-1}}\zeta_A(q^3).$$
\end{conjecture}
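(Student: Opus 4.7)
The plan is to invoke the Chang--Papanikolas--Yu criterion (Theorem 2.5.2 of \cite{CPY}), which converts the assertion that $\zeta_A(s_1,\ldots,s_r)$ is zeta-like with a prescribed rational multiple of $\zeta_A(w)$ into the existence of an explicit $\FF_q[t]$-linear relation between the integral points $\mathbf{u}_{\mathfrak{s}}, \mathbf{v}_{\mathfrak{s}}\in E'(A)$. Since both points are manufactured from the Anderson--Thakur polynomials $H_{s_i}$ and $H_w$, the task splits into first obtaining transparent closed-form expressions for these $H_n$ at the specific indices appearing in Conjecture \ref{LKconj}, and then reading off the linear relation together with its scalar.

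To obtain the closed forms I would combine Lemma \ref{inductionhn}(a) with the base-$p$ form of Lucas' Theorem \ref{Lucas}---using the reduction map $\underline{a}\mapsto\widetilde{\underline{a}}$ and the factorization $C_{\underline a}=C_{\widetilde{\underline a}}\,C_{\widetilde{\widetilde{\underline a}}}$---to cut the sum over $q$-power weighted partitions down to those with $C_{\underline a}\not\equiv 0\pmod p$. The indices in Conjecture \ref{LKconj}(a), namely $q^n-\sum N_iq^i$ and $p^m(q-1)q^{n-1+j}$, have rigid $q$-adic shapes for which the carry-free condition forces a very short list of surviving partitions; this is the setting of the ``$q$-th power recurrence when $n$ has particular $q$-adic shape'' hinted at in the introduction. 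Combining this recurrence with the linear recurrence of Lemma \ref{inductionhn}(b), which lowers $n$ by one $q$-power at a time, I would telescope each $H_n$ down to the trivial range $0\le n\le q-1$ where $H_n=1$, producing a product formula for $H_n/\Gamma_{n+1}|_{\theta=t}$ in the building blocks $G_i/D_i|_{\theta=t}$.

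With the closed forms in hand, verifying the conjectured ratio reduces to a finite rational-function identity. The binomial polynomials $\Psi_k$ of Proposition \ref{ATbinomial}, together with the vanishing $\Psi_k(a)=0$ for $\deg_\theta a<k$, are used to translate the AT-polynomial identity into the coordinates of $E'$, at which point the candidate scalar (for instance $\tfrac{[3]-1}{[3][2][1]^{q^2-q-1}}$ in part (b)) should appear as the unique $c$ making $\mathbf{v}_{\mathfrak{s}}-c\cdot\mathbf{u}_{\mathfrak{s}}$ vanish in $E'(A)$. The main obstacle I anticipate is the combinatorial bookkeeping in part (a): the indices depend on two parameters $(n,(N_i))$, and when $p^m<q$ the weight $p^m(q-1)$ has nontrivial base-$p$ carries, so the sieve on $C_{\underline a}$ must be performed through the finer digitwise factorization $C_{\underline a}\equiv\prod_j C_{\underline{n_j}}$ provided by Theorem \ref{Lucas}, rather than through the cleaner base-$q$ picture available when $p^m=q$. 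Part (b) is not a direct specialization of the ``in particular'' clause of (a)---note $q^3-q^2+q-1\neq q^2(q-1)$---but the same machinery delivers $H_{q^3-q^2+q-1}$ by an analogous telescoping, and the prescribed ratio then drops out of the resulting one-line identity in $\FF_q(t)$.
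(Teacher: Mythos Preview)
Your overall architecture---compute closed forms for the relevant $H_n$ via the Lucas/reduction machinery, then feed them into the Chang--Papanikolas--Yu criterion---matches the paper. But your description of the second half understates, and in places mislocates, the actual work.

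First, the version of the CPY criterion the paper uses (its Theorem~\ref{CPY}) is not phrased as a linear relation between the integral points $\mathbf{u}_{\mathfrak s},\mathbf{v}_{\mathfrak s}$ in $E'(A)$ to be ``read off,'' but as the solvability of a coupled system of $r$ Frobenius difference equations
\[
\delta_i \;=\; \delta_i^{(-1)}(t-\theta)^{s_i+\cdots+s_r}+\delta_{i+1}^{(-1)}H_{s_i-1}^{(-1)}(t-\theta)^{s_i+\cdots+s_r},
\]
with $\delta_i\in\overline{K}[t]$ and $a,b\in\FF_q[t]$. The heart of the proof is \emph{constructing} these $\delta_i$ explicitly---they involve factors like $(t-\theta^{1/q^j})^{p^m q^{n+r-1}}$, so they genuinely live over $\overline K$, not $A$---and then verifying each equation by hand. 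The paper does this in two stages: Proposition~\ref{mainproposition2} solves the subsystem for $\delta_2,\ldots,\delta_r$ when the $s_i$ are the geometric progression $p^m(q-1)q^{n-1+i}$, and the proof of Theorem~\ref{mainthm3}(b) then builds $\delta_1$ as a telescoping sum $\sum_{j=0}^{r-2}F_j$ and finds the correct $b,f_1\in\FF_q[t]$ making the top equation close. None of this is a ``one-line identity in $\FF_q(t)$''; it is the bulk of the argument, and your plan does not indicate how you would guess or verify these $\delta_i$.

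Second, you have misplaced the role of the binomial polynomials $\Psi_k$. They are \emph{not} used to pass to coordinates on $E'$; they appear inside the proof of the $H_n$ formula (Theorem~\ref{mainproposition}, Step~1), where the vanishing $\Psi_{M-1}(t^{M-1-|I|})=0$ for $|I|>0$ is exactly what collapses a large alternating sum to zero. Finally, two small index corrections: for part~(b) the polynomial you need is $H_{s_3-1}=H_{q^3-q^2+q-2}$ (not $H_{q^3-q^2+q-1}$), and this index does \emph{not} fit the template of Theorem~\ref{mainproposition}; the paper computes it separately (Proposition~\ref{example}) and then exhibits an explicit quintuple $(a,b,\delta_1,\delta_2,\delta_3)$ solving the three difference equations directly.
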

\
\\
\begin{remark}
Conjecture \ref{LRTconj} $($c$)$ is a special case of Conjecture \ref{LKconj}$($b$)$ by taking $p^m=q$, $N_0=N_{n}=1$ and $N_i=0$ for $0<i<n$.
\end{remark}

Note that when the weight $w$ is 'even', the statement that $\zeta_A(s_1, \cdots, s_r)$ is zeta-like is equivalent to that it is Eulerian. In Section \ref{zeta-like} we will prove non-Eulerian part of Conjecture \ref{LRTconj} and  Conjecture \ref{LKconj}. The Eulerian part of Conjecture \ref{LRTconj} will be treated in Section \ref{Eulerian}

\section{{Investigation into Anderson-Thakur polynomials}}\label{zeta-likeAT}
In general Anderson-Thakur polynomials $H_n$ are complicated to investigate. However, for index $n$ having very special $q$-adic expansion, we can
give a nicer and simpler formula for such $H_n$. For example, to prove Conjecture \ref{LKconj} $(b)$, we need to compute the corresponding Anderson-Thakur polynomials $H_0$, $H_{q^2-q-1}$, $H_{q^3-q^2+q-2}$ and $H_{q^3-1}$.
It is known that $H_0=1$. On the other hand, it can be directly proved that $H_{q^2-q-1}=\Gamma_{q^2-q}$, $H_{q^3-1}=\Gamma_{q^3}$ and
$H_{q^2-q}=\Gamma_{q^2-q+1}|_{\theta=t}\displaystyle\frac{(t-\theta^q)^{q-1}}{L_1^{q-1}|_{\theta=t}}$. ( These are special cases of Theorem \ref{mainproposition} ) For $m\in \NN$, let $S_m$ denotes the set of all $q$ power weighted partition $\underline{a}$ of $m$ with $C_{\underline{a}}\ne 0$. Furthermore, for given $\tilde{\underline{a}}$ with $0\leq \tilde{a_i} \leq q-1$, let $S_{m,\tilde{\underline{a}}}$ be the subset of $S_m$ collecting $\underline{a}$ satisfying
$a_i \equiv \tilde{a_i} \mod q$ for all $i$. By Lemma \ref{inductionhn} and using the reduction maps $\tilde{\tilde{\underline{a}}}$, we can compute $H_{q^3-q^2+q-2}$.

\begin{proposition}\label{example}
$$H_{q^3-q^2+q-2}=-[2]^{q-2}\{(t-\theta^q)^{q^2-q+1}+[1]^{q^2-1}(t-\theta^q)\}.$$
\end{proposition}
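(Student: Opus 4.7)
The plan is to apply Lemma \ref{inductionhn}(a) to $n = q^3 - q^2 + q - 2$, whose $q$-adic digits are $(q-2, 0, q-1, 0, \ldots)$. Since $n < q^3$, every $q$-power weighted partition $\underline{a}$ of $n$ satisfies $a_i = 0$ for $i \geq 3$, so the sum reduces to triples $(a_0, a_1, a_2)$ with $a_0 + a_1 q + a_2 q^2 = n$. I organize these via the reduction map $a_i = \tilde{a}_i + q\,\tilde{\tilde{a}}_i$ with $\tilde{a}_i \in \{0,\dots,q-1\}$, using the multiplicative splitting $C_{\underline{a}} = C_{\tilde{\underline{a}}} C_{\tilde{\tilde{\underline{a}}}}$.

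Since $n \equiv q-2 \pmod q$, we must have $\tilde{a}_0 = q-2$. Its base-$p$ expansion $(p-2, p-1, \dots, p-1)$ saturates all higher digit-positions at $p-1$. Applying Theorem \ref{Lucas}, the condition $C_{\tilde{\underline{a}}} \not\equiv 0 \pmod p$ forces every base-$p$ digit of $\tilde{a}_1$ and $\tilde{a}_2$ above position $0$ to vanish, and permits at most one unit at position $0$. Hence $\tilde{a}_1, \tilde{a}_2 \in \{0,1\}$ with $\tilde{a}_1 + \tilde{a}_2 \leq 1$, giving exactly three candidates $\tilde{\underline{a}} \in \{(q-2,0,0),\,(q-2,1,0),\,(q-2,0,1)\}$.

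For each, $\tilde{\tilde{\underline{a}}}$ is a $q$-power weighted partition of $(n - \sum_i \tilde{a}_i q^i)/q$, namely $q^2-q$, $q^2-q-1$, and $q^2-2q$ respectively. A second round of reduction and Lucas analysis on each smaller number identifies the valid $\tilde{\tilde{\underline{a}}}$: the first case yields $q$ partitions (one per $k \in \{0,\dots,q-1\}$, each with $C_{\underline{a}} \equiv 1$); the second yields a unique partition (the new leading digit $q-1$ again saturates base-$p$ space) with $C_{\underline{a}} \equiv -1$; the third yields $q-1$ partitions (indexed by $k \in \{0,\dots,q-2\}$, each with $C_{\underline{a}} \equiv -1$). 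Writing $Y := G_1/D_1|_{\theta=t}$ and $Z := G_2/D_2|_{\theta=t}$, the three families contribute $\sum_{k=0}^{q-1} Y^{qk}$, $-Y$, and $-Z\sum_{k=0}^{q-2} Y^{qk}$ respectively, so
\[
\frac{H_n}{\Gamma_{n+1}|_{\theta=t}} \;=\; \sum_{k=0}^{q-1} Y^{qk} \;-\; Y \;-\; Z \sum_{k=0}^{q-2} Y^{qk}.
\]

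The last step is to multiply by $\Gamma_{n+1}|_{\theta=t} = D_2^{q-1}|_{\theta=t}$ and collapse the sums. The essential identities are $Y - 1 = (t-\theta^q)/(t^q-t)$ and the Frobenius relation $(X^q - 1) = (X-1)^q$ in characteristic $p$, which together give the geometric collapse $\sum_{k=0}^{q-1} Y^{qk} = (Y-1)^{q(q-1)} = (t-\theta^q)^{q^2-q}/(t^q-t)^{q^2-q}$. The main obstacle is the final algebraic consolidation: the three pieces individually produce substantial rational denominators, and only after invoking the Frobenius identity $(t^q - \theta)^q = (t^q - t)^q + (t-\theta)^{q^2}$ to re-express the $Z$-factor in terms of $(t-\theta^q)$-compatible building blocks do the three contributions telescope into the claimed two-term closed form, with the overall coefficient $-[2]^{q-2}$ emerging from one power of $[2]$ being absorbed into the extra $(t-\theta^q)$ factor common to both surviving terms.
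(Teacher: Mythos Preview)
Your Lucas-plus-reduction analysis matches the paper's: the three residues $\tilde{\underline{a}}\in\{(q{-}2,0,0),(q{-}2,1,0),(q{-}2,0,1)\}$ and the bijections onto $S_{q^2-q}$, $S_{q^2-q-1}$, $S_{q^2-2q}$ are exactly what the paper establishes, and your second-level enumeration of those three smaller numbers is correct. The only structural difference is that the paper stops after one reduction and recognizes each block as $\bigl(H_m/\Gamma_{m+1}|_{\theta=t}\bigr)^q$ for $m\in\{q^2{-}q,\,q^2{-}q{-}1,\,q^2{-}2q\}$, then quotes the already-stated closed forms for $H_{q^2-q-1}$ and $H_{q^2-q}$ and invokes Lemma~\ref{inductionhn}(b) to eliminate $H_{q^2-2q}$ in favour of the other two. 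That substitution produces the grouping $(1-ZY^{-q})W^{q^2-q}+ZY^{-q}-Y$ with $W=Y-1$ and $ZY^{-q}=(t^{q^2}-\theta^q)/(t^{q^2}-t)$, from which both pieces simplify in one line. Your route---re-enumerating the smaller sets from scratch---lands on the equivalent raw expression $\sum_{k=0}^{q-1}Y^{qk}-Y-Z\sum_{k=0}^{q-2}Y^{qk}$.

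The gap is your last paragraph: you do not actually perform the simplification, and the one explicit identity you invoke, $(t^q-\theta)^q=(t^q-t)^q+(t-\theta)^{q^2}$, is false (the left side is $t^{q^2}-\theta^q$, the right side $2t^{q^2}-t^q-\theta^{q^2}$). Presumably you intended $(t^q-\theta)^q=(t^q-t)^q+(t-\theta)^{q}$, i.e.\ $t^{q^2}-\theta^q=(t^{q^2}-t^q)+(t^q-\theta^q)$, which is the identity that makes the $G_2$ factor tractable. Your expression does equal the target---using $\sum_{k=0}^{q-2}Y^{qk}=W^{q^2-q}-Y^{q^2-q}$ one gets $(1-Z)W^{q^2-q}+ZY^{q^2-q}-Y$, and this coincides with the paper's form because $Y^{q^2}-1=W^{q^2}$ and $1-Y^q=-W^q$ in characteristic $p$---but that computation is absent from your write-up, and a hand-wave around a misstated identity does not substitute for it.
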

\begin{proof}
For any $q$ power weighted partition $( a_0, a_1, a_2, 0, 0, \cdots )$ of $q^3-q^2+q-2$, we see that $a_0\equiv q-2 = p-2+(p-1)p+\cdots+(p-1)p^{l-1}$ $\mod q$.
Let $a_i \equiv a_{i,0}+a_{i,1}p+\cdots+a_{i,l-1}p^{l-1}$ $\mod q$, where $i=1,2$ and $0\leq a_{i,j}\leq p-1$ for $j>1$.
It follows that if $C_{\underline{a}}\ne 0$, then $a_{1,0}+a_{2,0}\leq 1$ and $a_{1,j}+a_{2,j}=0$ for $j>0$. This implies
$\tilde{\underline{a}} = ( q-2,0,0,\cdots )$ or $( q-2,1,0,\cdots )$ or $( q-2,0,1,\cdots )$ and hence $S_{q^3-q^2+q-2}$ is the disjoint union of
$S_{q^3-q^2+q-2,(q-2,0,0)}$, $S_{q^3-q^2+q-2,(q-2,1,0)}$ and $S_{q^3-q^2+q-2,(q-2,0,1)}$. We have the following reductions, which are bijective:
\\
$$\begin{aligned}
&\underline{\tilde{\tilde{a}}}:S_{q^3-q^2+q-2,(q-2,0,0)}\rightarrow S_{q^2-q}\\
&\underline{\tilde{\tilde{a}}}:S_{q^3-q^2+q-2,(q-2,1,0)}\rightarrow S_{q^2-q-1}\\
&\underline{\tilde{\tilde{a}}}:S_{q^3-q^2+q-2,(q-2,0,1)}\rightarrow S_{q^2-2q}.
\end{aligned}$$
\\
Moveover, we see that $C_{\underline{a}}=C_{\underline{\tilde{\tilde{a}}}}$ if $\tilde{\underline{a}} = ( q-2,0,0,\cdots )$ and $C_{\underline{a}}=-C_{\underline{\tilde{\tilde{a}}}}$
if $\tilde{\underline{a}} = ( q-2,1,0,\cdots )$ or $( q-2,0,1,\cdots )$.
We obtain from Lemma \ref{inductionhn} that
$$\begin{aligned}
\frac{H_{q^3-q^2+q-2}}{\Gamma_{q^3-q^2+q-1}|_{\theta=t}} &= \sum_{\tilde{\underline{a}}}\sum_{\underline{a}\in S_{q^3-q^2+q-2,\tilde{\underline{a}}}}C_{\underline{a}}(\frac{G_0}{D_0|_{\theta=t}})^{a_0} \prod_{i\geq 1} (\frac{G_i}{D_i|_{\theta=t}})^{a_i}\\
&=\sum_{\tilde{\underline{a}}}\sum_{\underline{a}\in S_{q^3-q^2+q-2,\tilde{\underline{a}}}}C_{\underline{\tilde{a}}}C_{\underline{\tilde{\tilde{a}}}}(\frac{G_0}{D_0|_{\theta=t}})^{q\tilde{\tilde{a_0}}+\tilde{a_0}} \prod_{i\geq 1} (\frac{G_i}{D_i|_{\theta=t}})^{q\tilde{\tilde{a_i}}+\tilde{a_i}}\\
&=(\frac{H_{q^2-q}}{\Gamma_{q^2-q+1}|_{\theta=t}})^q-\frac{G_1}{D_1|_{\theta=t}}(\frac{H_{q^2-q-1}}{\Gamma_{q^2-q}|_{\theta=t}})^q-\frac{G_2}{D_2|_{\theta=t}}(\frac{H_{q^2-2q}}{\Gamma_{q^2-2q+1}|_{\theta=t}})^q
\end{aligned}$$
\\
\\
On the other hand,  by Lemma \ref{inductionhn} (b), we have
$$\frac{H_{q^2-q}}{\Gamma_{q^2-q+1}|_{\theta=t}}= \frac{H_{q^2-q-1}}{\Gamma_{q^2-q}|_{\theta=t}}+\frac{G_1}{D_1|_{\theta=t}}\frac{H_{q^2-2q}}{\Gamma_{q^2-2q+1}|_{\theta=t}}.$$
\\
It follows that

$$\begin{aligned}
\frac{H_{q^3-q^2+q-2}}{\Gamma_{q^3-q^2+q-1}|_{\theta=t}}
&=(\frac{H_{q^2-q}}{\Gamma_{q^2-q+1}|_{\theta=t}})^q-\frac{G_1}{D_1|_{\theta=t}}-\frac{G_2}{D_2|_{\theta=t}}
(\frac{D_1|_{\theta=t}}{G_1})^q(\frac{H_{q^2-q}}{\Gamma_{q^2-q+1}|_{\theta=t}}-1)^q\\
&=(1-\frac{G_2}{D_2|_{\theta=t}}(\frac{D_1|_{\theta=t}}{G_1})^q)(\frac{(t-\theta^q)^{q-1}}{L_1^{q-1}|_{\theta=t}})^q+
\frac{G_2}{D_2|_{\theta=t}}(\frac{D_1|_{\theta=t}}{G_1})^q-\frac{G_1}{D_1|_{\theta=t}}\\
&=(1-\frac{(t^{q^2}-\theta^q)}{[2]|_{\theta=t}})(\frac{(t-\theta^q)^{q-1}}{L_1^{q-1}|_{\theta=t}})^q+
\frac{(t^{q^2}-\theta^q)}{[2]|_{\theta=t}}-\frac{G_1}{D_1|_{\theta=t}}\\
&=\frac{-(t-\theta^q)^{q^2-q+1}}{[2][1]^{q(q-1)}|_{\theta=t}}+\frac{-(t-\theta^q)[1]^{q-1}|_{\theta=t}}{[2]|_{\theta=t}}
\end{aligned}$$
\\
By definition of $\Gamma$-function, we can easily derive that $$\Gamma_{q^3-q^2+q-1}= D_2^{q-1}$$ and the result follows.

\end{proof}
\subsection{{\small Formula for Anderson-Thakur polynomials $H_m$  with $m=q^n-\sum N_iq^i$}}
For $n\in \NN$, consider a tuple $(N_0,\cdots,N_{n-1})$ with $N_i \in \NN \cup \{0\}$ satisfying $0\leq \sum_{i=0}^{n-1}  N_i \leq q-1$. This implies
$q^n-\sum_{i=0}^{n-1} N_i q^i-1 \geq 0$. We have the following formula for these special polynomials:

\begin{theorem} \label{mainproposition} Let $n\in \NN$ and $N_i \in \NN \cup \{0\}$ satisfying $0\leq \sum_{i=0}^{n-1}  N_i \leq q-1$. Then
$$\displaystyle \frac{H_{q^n-\sum N_i q^i-1}}{\Gamma_{q^n-\sum N_i q^i}|_{\theta=t}} = \frac{(-1)^{\sum_{i=0}^{n-2} (n-1-i)N_iq^i}}{\prod_{i=0}^{n-2}L_{n-1-i}^{N_iq^i}}\prod_{i=1}^{n-1} (t-\theta^{q^i})^{\sum_{j=0}^{n-1-i} N_jq^j}\ \ \ \ \ \ \ \ \ \ \ \  (\ref{mainproposition}.1).$$
\end{theorem}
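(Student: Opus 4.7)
Our plan is to prove (\ref{mainproposition}.1) by induction on $n$.

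\emph{Base case $n=1$.} The index is $m := q-1-N_0 \in [0,q-1]$, so $H_m = 1$ and $\Gamma_{m+1}|_{\theta=t} = 1$. The right-hand side of (\ref{mainproposition}.1) is also $1$, as every sum and product is empty.

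\emph{Inductive step.} Assuming the formula for $n-1$, we will derive a $q$-th power recurrence following the strategy of Proposition \ref{example}: applying Lemma \ref{inductionhn}(a), decomposing each partition $\underline{a} \in S_m$ via $a_i = q\,\widetilde{\widetilde{a_i}} + \widetilde{a_i}$ with $\widetilde{a_i} \in [0, q-1]$, using the factorization $C_{\underline{a}} = C_{\tilde{\underline{a}}} C_{\tilde{\tilde{\underline{a}}}}$ from Theorem \ref{Lucas} together with the identity $c^q = c$ for $c \in \FF_p$, we obtain
\begin{equation*}
\frac{H_m}{\Gamma_{m+1}|_{\theta=t}} = \sum_{\tilde{\underline{a}}} C_{\tilde{\underline{a}}} \prod_{i=0}^{n-1}\Bigl(\frac{G_i}{D_i|_{\theta=t}}\Bigr)^{\widetilde{a_i}} \Bigl(\frac{H_{m'}}{\Gamma_{m'+1}|_{\theta=t}}\Bigr)^{q},
\end{equation*}
where $m' := (m - \sum_i \widetilde{a_i} q^i)/q$. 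For $m = q^n - \sum N_i q^i - 1$ with $\sum N_i \leq q-1$, the residue $\widetilde{a_0} = q-1-N_0$ is forced, and Theorem \ref{Lucas} further constrains the remaining digits by $\sum_{i\geq 1} \widetilde{a_i} \preceq N_0$ (digit-wise in base $p$). One checks that $m' = q^{n-1} - \sum_{j=0}^{n-2} N'_j q^j - 1$ with $N'_j := N_{j+1} + \widetilde{a_{j+1}}$ and $\sum_j N'_j \leq (q-1-N_0) + N_0 = q-1$, so the inductive hypothesis will apply.

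Next, we plan to substitute the inductive formula and raise it to the $q$-th power via $(-1)^q = -1$, $(t - \theta^{q^j})^q = t^q - \theta^{q^{j+1}}$, together with the identities $D_{k+1}|_{\theta=t} = [k+1]|_{\theta=t}\,(D_k|_{\theta=t})^q$ and $L_{k+1}|_{\theta=t} = [k+1]|_{\theta=t}\, L_k|_{\theta=t}$. After this each summand should be rewritten in factors matching the right-hand side of (\ref{mainproposition}.1). Finally, the sum over admissible $(\widetilde{a_1},\ldots,\widetilde{a_{n-1}})$ should collapse, via a base-$p$ multinomial identity (an instance of Theorem \ref{Lucas}), to the single product claimed in (\ref{mainproposition}.1).

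The main obstacle will be this last combinatorial collapse: the sum over $\tilde{\underline{a}}$ with $\sum_{i \geq 1} \widetilde{a_i} \preceq N_0$ must telescope to a single product after substitution. The delicate bookkeeping of the signs $(-1)^{(n-1-i)N_i q^i}$, of the multinomial coefficients $C_{\tilde{\underline{a}}} \in \FF_p$, and of the denominator factors $L_\ell|_{\theta=t}$ will have to be tracked carefully to confirm the collapse; this is where the hypothesis $\sum N_i \leq q-1$ is essential, since it prevents any higher-order base-$p$ carrying that would destroy the clean product form.
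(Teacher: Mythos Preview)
Your setup is sound and matches the paper's: induction on $n$, the $q$-th power recurrence via the reduction map $\tilde{\tilde{\underline{a}}}$, and the observation that $m' = q^{n-1} - \sum N_j' q^j - 1$ with $\sum N_j' \le q-1$ so that the inductive hypothesis applies. Where your proposal goes wrong is the last paragraph.

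The collapse of the sum over admissible $\tilde{\underline{a}}$ is \emph{not} a base-$p$ multinomial identity; it is a genuine polynomial identity in $t$ and $\theta$. Already in the simplest nontrivial case $n=3$, $N_0=1$, $N_1=N_2=0$ your recipe produces
\[
1 \;-\; \frac{G_1}{D_1|_{\theta=t}}\Bigl(\frac{\theta^q-t}{t^q-t}\Bigr)^{q} \;-\; \frac{G_2}{D_2|_{\theta=t}}
\;\stackrel{?}{=}\; \frac{(t-\theta^q)(t-\theta^{q^2})}{L_2|_{\theta=t}},
\]
and nothing in Theorem~\ref{Lucas} tells you why this holds: the coefficients $C_{\tilde{\underline a}}\in\FF_p$ have already been consumed, and what remains is an equality of rational functions in $\FF_q(t,\theta)$. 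For general $n$ the required identity is essentially
\[
1-\sum_{i=0}^{n-1}\frac{G_i}{D_i|_{\theta=t}}\,\frac{(-1)^{n-1-i}}{L_{n-1-i}^{q^i}|_{\theta=t}}\prod_{j=1}^{n-1-i}(t-\theta^{q^j})^{q^i}=0,
\]
and the paper proves it by expanding and recognising the $\theta^I$-coefficients as values $\Psi_{n-1}|_{\theta=t}(t^{\,n-1-|I|})$, then invoking Proposition~\ref{ATbinomial} (the vanishing of the Carlitz binomial $\Psi_k$ on polynomials of degree $<k$). This is the missing ingredient in your plan.

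There is a second structural difference you should be aware of. The paper does not attempt the collapse in one shot. It runs a \emph{secondary} induction, first on $N_0$ (Step~1) and then on the highest nonzero index $D$ (Step~2), and at each stage combines the $q$-th power recurrence with the linear recurrence of Lemma~\ref{inductionhn}(b). The point of this detour is that the inductive hypothesis at level $n-1$ is used only to produce a \emph{ratio} between two already-known quantities, so the messy $q$-th powers $(t-\theta^{q^j})^q = t^q-\theta^{q^{j+1}}$ never have to be matched directly against the $(t-\theta^{q^i})$ factors in the target. Your direct substitution forces exactly that mismatch, which is why the residual sum is hard to handle without the $\Psi_k$ identity.
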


The key idea is using Lemma \ref{inductionhn} and Theorem \ref{Lucas} to descend Anderson-Thakur polynomials from $H_n$ to suitable $H_m$ with $m<n$.

\begin{proof}
Suppose that $q=p^l>2$.
We will prove by induction on $n$ and $N_i$. For $n=1$ it is clear that $\displaystyle\frac{H_{q-N_0-1}}{\Gamma_{q-N_0}|_{\theta=t}} = 1$ satisfying (\ref{mainproposition}.1) for any $0\leq N_0 \leq q-1$. Suppose that the statement holds for $H_{q^n-\sum N_i q^i-1}$ with $1\leq n<M$. Our goal is to
prove the formula (\ref{mainproposition}.1) holds for $H_{q^M-\sum N_i q^i-1}$. We split the proof into two steps.
\\
\\
$\underline{\mathbf{Step 1}}$ The formula holds for $H_{q^M-N-1}$ with $0\leq N \leq q-1$.
\\
\\
Now we start the case $H_{q^M-1}$ corresponding to $N_i=0$ for $i=0,
\cdots M-1$. Let $\underline{a} = (a_0,\cdots, a_{M-1},0, 0, \cdots )$ be a $q$ power weighted partition of $q^M-1$ with $C_{\underline{a}}\ne 0$, then
$a_0 \equiv q-1 = (p-1)+\cdots + (p-1)p^{l-1} \mod q$. By Lemma \ref{Lucas}, it forces $a_i\equiv 0 \mod q$.
Then by considering $\underline{\tilde{a}}= (q-1,0, 0, \cdots )$, we have that
$\underline{\tilde{\tilde{a}}}$ is a $q$ power weighted
partition of $q^{M-1}-1$. Conversely, if  $\underline{a}' \in S_{q^{M-1}-1}$, let
$\underline{a}=(qa_0'+q-1,qa_1',\cdots, qa_{M-1}', 0, 0, \cdots )$, then $\underline{a} \in S_{q^{M}-1}$ and $\underline{\tilde{\tilde{a}}}=\underline{a}'$. Therefore,
$$\underline{\tilde{\tilde{a}}}: S_{q^M-1}\rightarrow S_{q^{M-1}-1}$$ is bijective.  Moreover, $C_{\underline{a}} = C_{\underline{\tilde{\tilde{a}}}}$. It follows that

$$\begin{aligned}
\frac{H_{q^M-1}}{\Gamma_{q^M}|_{\theta=t}} &= \sum_{\underline{a}\in S_{q^M-1}}C_{\underline{a}}(\frac{G_0}{D_0|_{\theta=t}})^{a_0} \prod_{j\geq1} (\frac{G_i}{D_i|_{\theta=t}})^{a_i}\\
&= \sum_{\underline{a}'\in S_{q^{M-1}-1}}C_{\underline{a}'}(\frac{G_0}{D_0|_{\theta=t}})^{qa_0'+q-1} \prod_{j\geq1} (\frac{G_i}{D_i|_{\theta=t}})^{qa_i'}\\
&= (\frac{H_{q^{M-1}-1}}{\Gamma_{q^{M-1}}|_{\theta=t}})^q =1.
\end{aligned}$$
Here the last step is by induction hypothesis.
\\

Suppose that the formula (\ref{mainproposition}.1) holds for $q^M-N'-1$ with $0\leq N' <N \leq q-1$.
By Lemma \ref{inductionhn} (b) we have
$$\frac{H_{q^M-(N-1)-1}}{\Gamma_{q^M-(N-1)}|_{\theta=t}} =\frac{H_{q^M-N-1}}{\Gamma_{q^M-N-1}|_{\theta=t}} + \sum_{i=1}^{M-1} \frac{G_i}{D_i|_{\theta=t}}\frac{H_{q^M-(N-1)-q^i-1}}{\Gamma_{q^M-N-q^i+1}|_{\theta=t}}.$$

Note that $q^M-(N-1)-q^i-1 $ and $q^M-(N-1)-1$ are congruent to $q-N$ $\mod q$. If we consider all possible vectors $
\tilde{\underline{a}}= (q-N,\widetilde{a_1},\cdots, \tilde{a_{M-1}}, 0, 0, \cdots )$ with $0\leq \tilde{a_i}\leq q-1$ and $C_{\widetilde{\underline{a}}}\ne 0$, then
$\sum_{j=0}^{\infty} \tilde{a_j} = \sum_{j=0}^{M-1} \tilde{a_j} \leq q-1$, which implies $\sum_{j\geq 1}\tilde{a_j} \leq N-1\leq q-2$. It follows that
$$q^{M}-(N-1)-q^{i}-1-\sum_{j\geq 0}\tilde{a_j}q^{j} = q^M-q^i-\sum_{j\geq 1}\tilde{a_j}q^{j}-q\geq q^M-(q-1)q^{M-1}-q \geq 0.$$
\\
Similarly, $q^{M}-(N-1)-1-\displaystyle\sum_{j\geq 0}\tilde{a_j}q^{j}\geq 0.$ Hence for a given $\tilde{\underline{a}}$ we have two reduction maps
\\
$$\begin{aligned}
&\tilde{\tilde{\underline{a}}}: S_{q^M-(N-1)-q^i-1, \tilde{\underline{a}}} \rightarrow  S_{q^{M-1}-q^{i-1}-1-\sum_{j\geq 1}\widetilde{a_j}q^{j-1} }\\
&\tilde{\tilde{\underline{a}}}: S_{q^M-(N-1)-1, \tilde{\underline{a}}} \rightarrow  S_{q^{M-1}-1-\sum_{j\geq 1}\widetilde{a_j}q^{j-1} }.
\end{aligned}$$
\\
By argument similar to the above, we see further that they are bijective. Since $S_{q^M-(N-1)-q^i-1} = \displaystyle\bigcup_{\tilde{a}} S_{q^M-(N-1)-q^i-1, \tilde{\underline{a}}}$, by Lemma \ref{inductionhn} (a) we have
\\
$$\begin{aligned}
\frac{H_{q^M-(N-1)-q^i-1}}{\Gamma_{q^M-(N-1)-q^i}|_{\theta=t}} &= \sum_{\tilde{\underline{a}}}\sum_{\underline{a}\in S_{q^M-(N-1)-q^i-1,\tilde{\underline{a}}}}C_{\underline{a}}(\frac{G_0}{D_0|_{\theta=t}})^{a_0} \prod_{j\geq1} (\frac{G_i}{D_i|_{\theta=t}})^{a_i}\\
&=\sum_{\tilde{\underline{a}}}\sum_{\underline{a}\in S_{q^M-(N-1)-q^i-1,\tilde{\underline{a}}}}C_{\tilde{\underline{a}}}C_{\tilde{\tilde{\underline{a}}}}(\frac{G_0}{D_0|_{\theta=t}})^{q\tilde{\tilde{a_0}}+\tilde{a_0}} \prod_{j\geq1} (\frac{G_i}{D_i|_{\theta=t}})^{q\tilde{\tilde{a_i}}+\tilde{a_i}}\\
&=\sum_{\tilde{\underline{a}}}\sum_{\underline{a}'\in S_{q^{M-1}-q^{i-1}-1-\sum_{j\geq 1}\widetilde{a_j}q^{j-1}}}C_{\tilde{\underline{a}}}C_{\underline{a}'}(\frac{G_0}{D_0|_{\theta=t}})^{qa_0'+\tilde{a_0}} \prod_{j\geq1} (\frac{G_i}{D_i|_{\theta=t}})^{qa_i'+\tilde{a_i}}\\
&=\sum_{\tilde{\underline{a}}}C_{\tilde{\underline{a}}}\prod_{j\geq 0} (\frac{G_i}{D_i|_{\theta=t}})^{\tilde{a_i}}(\frac{H_{q^{M-1}-q^{i-1}-1-\sum_{j\geq 1}\tilde{a_j}q^{j-1}}}{\Gamma_{q^{M-1}-q^{i-1}-\sum_{j\geq 1}\tilde{a_j}q^{j-1}}|_{\theta=t}})^q
\end{aligned}$$
Similarly, $$\frac{H_{q^M-(N-1)-1}}{\Gamma_{q^M-(N-1)}|_{\theta=t}}= \sum_{\tilde{\underline{a}}}C_{\tilde{\underline{a}}}\prod_{j\geq 0} (\frac{G_i}{D_i|_{\theta=t}})^{\tilde{a_i}}(\frac{H_{q^{M-1}-1-\sum_{j\geq 1}\widetilde{a_j}q^{j-1}}}{\Gamma_{q^{M-1}-\sum_{j\geq 1}\widetilde{a_j}q^{j-1}}|_{\theta=t}})^q.$$
Now for each $\tilde{\underline{a}}$, $q^{M-1}-q^{i-1}-1-\sum_{j\geq 1}\tilde{a_j}q^{j-1}$ can be written as $q^{M-1}-\sum_{j=0}^{M-2} \tilde{N_j}q^j-1,$ where
$$\tilde{N_j}= \tilde{a_{j+1}}\ \mbox{if}\ j\ne i-1\ \mbox{and}\ \tilde{N_{i-1}}= \tilde{a_{i}}+ 1.$$ Since $\sum_{j=0}^{M-2} \tilde{N_j} = 1+\sum_{j\geq 1}\tilde{a_j}\leq q-N+\sum_{j\geq 1}\tilde{a_j}=\sum_{j\geq 0}\tilde{a_j}\leq q-1$. So by induction hypothesis on $M-1<M$, one can show that
$$\frac{H_{q^{M-1}-q^{i-1}-1-\sum_{j\geq 1}\widetilde{a_j}q^{j-1}}}{\Gamma_{q^{M-1}-q^{i-1}-\sum_{j\geq 1}\widetilde{a_j}q^{j-1}}|_{\theta=t}} = \frac{H_{q^{M-1}-1-\sum_{j\geq 1}\widetilde{a_j}q^{j-1}}}{\Gamma_{q^{M-1}-\sum_{j\geq 1}\widetilde{a_j}q^{j-1}}|_{\theta=t}} \frac{(-1)^{M-2-(i-1)q^{i-1}}}{L_{M-2-(i-1)}^{q^{i-1}}}\prod_{j=1}^{M-2-(i-1)}(t-\theta^{q^j})^{q^{i-1}}.$$
It follows that
$$\frac{H_{q^M-(N-1)-q^i-1}}{\Gamma_{q^M-(N-1)-q^i}|_{\theta=t}} = \frac{H_{q^M-(N-1)-1}}{\Gamma_{q^M-(N-1)}|_{\theta=t}} \frac{(-1)^{M-i-1}}{L_{M-i-1}^{q^{i}}}\prod_{j=1}^{M-i-1}(t-\theta^{q^j})^{q^{i}}.$$
It remains to prove that
$$1-\sum_{i=1}^{M-1} \frac{G_i}{D_i|_{\theta=t}}\frac{(-1)^{M-i-1}}{L_{M-i-1}^{q^{i}}}\prod_{j=1}^{M-i-1}(t-\theta^{q^j})^{q^{i}}=
\frac{(-1)^{M-1}}{L_{M-1}}\prod_{j=1}^{M-1}(t-\theta^{q^j}).$$
Let $$f(\theta):=1-\sum_{i=0}^{M-1} \frac{G_i}{D_i|_{\theta=t}}\frac{(-1)^{M-i-1}}{L_{M-i-1}^{q^{i}}}\prod_{j=1}^{M-i-1}(t-\theta^{q^j})^{q^{i}}$$
Let $U$ denotes the collection of  all subsets of $\{1,2,\cdots,M-1\}$. For $I=\{i_1,\cdots,i_m \}\in U$, we put $\theta^I = \theta^{q^{i_1}+\cdots+q^{i_m}}$,
and $|I|=m$, the number of elements in $I$. Then for $i=0,\cdots,M-1$, $$G_i\prod_{j=1}^{M-i-1}(t-\theta^{q^j})^{q^{i}}=\prod_{j=1}^{M-1} (t^{q^i}-\theta^{q^j}) = \sum_{I\in U} \theta^I (-1)^{|I|}(t^{q^i})^{M-1-|I|}.$$ Since for distinct $I_1, I_2$, $\theta^{I_1}\ne \theta^{I_2}$, we have $$f(\theta) =1-\sum_{I\in U}(\sum_{i=0}^{M-1} \frac{(t^{q^i})^{M-1-|I|}}{(-1)^{M-i-1}D_iL_{M-i-1}^{q^i}|_{\theta=t}}) \theta^I (-1)^{|I|}.$$
Observe that $$\frac{1}{(-1)^{M-i-1}L_{M-i-1}^{q^i}|_{\theta=t}}= \frac{\prod_{j=1}^{i}(t^{q^i}-t^{q^{j+M-1}})}{{(-1)^{M-1}}L_{M-1}^{q^i}|_{\theta=t}} .$$
It follows that $$f(\theta) =1-\sum_{I\in U}(\sum_{i=0}^{M-1} \frac{\prod_{j=1}^{i}(t^{q^i}-t^{q^{j+M-1}})(t^{q^i})^{M-1-|I|}}{(-1)^{M-1}D_iL_{M-1}^{q^i}|_{\theta=t}}) \theta^I (-1)^{|I|}. = 1- \sum_{I\in U} \Psi_{M-1}|_{\theta=t}(t^{M-1-|I|})\theta^I (-1)^{|I|}. $$
By Proposition \ref{ATbinomial}, $\Psi_{M-1}|_{\theta=t}(t^{M-1-|I|})= 0$ if $M-1-|I| <M-1$, that is to say $|I|>0$, and $\Psi_{M-1}|_{\theta=t}(t^{M-1-|I|})= 1$ if  $|I|=0$, that is to say $I$ is the empty set. In the later condition $\theta^I (-1)^{|I|}=1$ and hence $f(\theta) =1-1 =0$.
By the claim we deduce that
\\
$$\begin{aligned}
\frac{H_{q^M-N-1}}{\Gamma_{q^M-N}|_{\theta=t}} &= \frac{H_{q^M-(N-1)-1}}{\Gamma_{q^M-(N-1)}|_{\theta=t}}\frac{(-1)^{M-1}}{L_{M-1}}\prod_{j=1}^{M-1}(t-\theta^{q^j})\\
&=\frac{(-1)^{ (M-1)(N-1)}}{L_{M-1}^{N-1}}\prod_{i=1}^{M-1} (t-\theta^{q^i})^{N-1} \frac{(-1)^{M-1}}{L_{M-1}}\prod_{j=1}^{M-1}(t-\theta^{q^j})\\
&=\frac{(-1)^{ (M-1)N}}{L_{M-1}^{N}}\prod_{i=1}^{M-1} (t-\theta^{q^i})^{N}.
\end{aligned}$$
\\
\\
$\underline{\mathbf{Step 2}}$ The formula holds for $H_{q^M-\sum_{i=0}^{D} N_iq^i-1}$ with $1\leq D\leq M-1$.
\\
\\
Suppose that $H_{q^M-\sum_{i=0}^{d} N_i'q^i-1}$ satisfies the formula (\ref{mainproposition}.1) for $0\leq d<D\leq M-1$ and $0\leq \sum_{i=0}^{d} N_i'q^i \leq q-1$. We will prove that $H_{q^M-\sum_{i=0}^{D} N_iq^i-1}$ satisfies the formula (\ref{mainproposition}.1) for $0\leq \sum_{i=0}^{D} N_iq^i \leq q-1$ with $N_D \ne 0$. For $1\leq N_D \leq q-1$ and $\sum_{i=0}^{D} N_iq^i \leq q-1$, we have
$$q^M-\sum_{i=0}^{D-1} N_iq^i -(N_D-1)q^D -1 \equiv q^M-\sum_{i=0}^{D-1} N_iq^i -N_Dq^D -1 \equiv q-(N_0 +1) \mod q.$$
\\
Put $N_k = 0$  for $k>D$. Then for any tuple $
\tilde{\underline{a}}= (q-(N_0+1),\tilde{a_1},\cdots, \tilde{a_{M-1}})$ with $C_{\tilde{\underline{a}}}\ne 0$,
$\sum_{j=1}^{M-1} \tilde{a_j} \leq  q-1- \tilde{a_0}= N_0$. Also we have $\sum_{j\geq 1}N_j \leq q-1-N_0 $. This implies that
$$q^{M}-\sum_{j\geq 0}(N_j+\tilde{a_j})q^{j}-1 =q^{M}-\sum_{j\geq 1}(N_j+\tilde{a_j})q^{j}-q \geq q^{M}-(q-1)q^{M-1}-q \geq 0 $$ for $M\geq 2$.
Similarly we have $q^{M}+q^{D}-\sum_{j\geq 0}(N_j+\tilde{a_j})q^{j}-1\geq 0 $.  It follows that
\\
$$\begin{aligned}
&\tilde{\tilde{\underline{a}}}: S_{q^M-\sum_{i=0}^{D-1} N_iq^i -(N_D-1)q^D-1, \tilde{\underline{a}}} \rightarrow  S_{q^{M-1}+q^{D-1}-\sum_{j\geq 1}(N_j+\tilde{a_j})q^{j-1} -1 }\\
&\tilde{\tilde{\underline{a}}}: S_{q^M-\sum_{i=0}^{D-1} N_iq^i -N_Dq^D-1, \tilde{\underline{a}}} \rightarrow  S_{q^{M-1}-\sum_{j\geq 1}(N_j+\tilde{a_j})q^{j-1} -1}
\end{aligned}$$
are bijective. By Lemma \ref{inductionhn}(a) we have
\\
$$\begin{aligned}
\frac{H_{q^M-\sum_{i=0}^{D-1} N_iq^i -(N_D-1)q^D-1}}{\Gamma_{q^M-\sum_{i=0}^{D-1} N_iq^i -(N_D-1)q^D}|_{\theta=t}} &= \sum_{\tilde{\underline{a}}}\sum_{\underline{a}\in S_{q^M-\sum_{i=0}^{D-1} N_iq^i -(N_D-1)q^D-1,\tilde{\underline{a}}}}(\frac{G_0}{D_0|_{\theta=t}})^{a_0} \prod_{j\geq1} (\frac{G_i}{D_i|_{\theta=t}})^{a_i}\\
&=\sum_{\tilde{\underline{a}}}C_{\tilde{\underline{a}}}\prod_{j\geq 0} (\frac{G_i}{D_i|_{\theta=t}})^{\tilde{a_i}}(\frac{H_{q^{M-1}+q^{D-1}-\sum_{j\geq 1}(N_j+\tilde{a_j})q^{j-1} -1}}{\Gamma_{q^{M-1}+ q^{D-1}-\sum_{j\geq 1}(N_j+\tilde{a_j})q^{j-1}}|_{\theta=t}})^q
\end{aligned}$$
\\
and
\\
$$\begin{aligned}
\frac{H_{q^M-\sum_{i=0}^{D-1} N_iq^i -N_Dq^D-1}}{\Gamma_{q^M-\sum_{i=0}^{D-1} N_iq^i -N_Dq^D}|_{\theta=t}} &= \sum_{\tilde{\underline{a}}}\sum_{\underline{a}\in S_{q^M-\sum_{i=0}^{D-1} N_iq^i -(N_D-1)q^D-1,\tilde{\underline{a}}}}(\frac{G_0}{D_0|_{\theta=t}})^{a_0} \prod_{j\geq1} (\frac{G_i}{D_i|_{\theta=t}})^{a_i}\\
&=\sum_{\tilde{\underline{a}}}C_{\tilde{\underline{a}}}\prod_{j\geq 0} (\frac{G_i}{D_i|_{\theta=t}})^{\tilde{a_i}}(\frac{H_{q^{M-1}-\sum_{j\geq 1}(N_j+\tilde{a_j})q^{j-1} -1}}{\Gamma_{q^{M-1}-\sum_{j\geq 1}(N_j+\tilde{a_j})q^{j-1}}|_{\theta=t}})^q
\end{aligned}$$
\\
\\
By induction hypothesis on $M-1<M$, we deduce that
$$\frac{H_{q^M-\sum_{i=0}^{D-1} N_iq^i -N_Dq^D-1}}{\Gamma_{q^M-\sum_{i=0}^{D-1} N_iq^i -N_Dq^D}|_{\theta=t}}=
\frac{H_{q^M-\sum_{i=0}^{D-1} N_iq^i -(N_D-1)q^D-1}}{\Gamma_{q^M-\sum_{i=0}^{D-1} N_iq^i -(N_D-1)q^D}|_{\theta=t}} \frac{(-1)^{(M-D-1)q^{D}}}{L_{M-D-1}^{q^{D}}}[(t-\theta^q)\cdots (t-\theta^{q^{M-D-1}})]^{q^{D}}.$$ So if we start from the case $N_D=1$,
we can prove that the formula (\ref{mainproposition}.1) holds for $1\leq N_D \leq q-1$ by the above relation and the induction hypothesis of
$q^M-\sum_{i=0}^{D-1} N_i'q^i-1$.

\end{proof}

\section{Main result on Zeta-like Multizeta values}\label{zeta-like}
In this section we will prove Conjecture \ref{LRTconj} (b) with $q>2$ and Conjecture \ref{LKconj}.
\subsection{{\small Frobenius twisting}}
We fix the following automorphism of the field of Laurent series over $\CC_{\infty}$, which is referred to as \emph{Frobenius twisting}:
\[
     \begin{array}{rcl}
      \laurent{\CC_\infty}{t}  & \rightarrow & \laurent{\CC_\infty}{t},\\
       f:=\sum_{i}a_{i}t^{i} & \mapsto & f^{(-1)}:=\sum_{i}{a_{i}}^{\frac{1}{q}}t^{i}. \\
     \end{array}
 \]
\\
In \cite{CPY}, the following criterion is proved for deciding zeta-like multizeta values in terms of Anderson-Thakur polynomials.
\begin{theorem} $($ Chang-Papanikolas-Yu \cite[Theorem 2.5.2, 4.4.2]{CPY} $)$ \label{CPY}
Given a tuple $(s_1,s_2,\cdots, s_r)$, then
$\zeta(s_1,\cdots, s_r)$ is zeta-like if and only if there exist  $\delta_{1}, \cdots , \delta_{r}\in \overline{K}[t]$ and $a,b\in \FF_q[t]$ such that
\begin{equation}\label{E:Eqdelta}
\begin{split}
  \delta_{1} &= {\textstyle \delta_{1}^{(-1)}(t-\theta)^{w}+\delta_{2}^{(-1)}H_{s_1-1}^{(-1)}(t-\theta)^{w}+bH_{w-1}^{(-1)}(t-\theta)^{w}   ;} \\
   \delta_{2} &= {\textstyle \delta_{2}^{(-1)}(t-\theta)^{s_{2}+\cdots+s_{r}}+\delta_{3}^{(-1)}H_{s_2-1}^{(-1)}(t-\theta)^{s_{2}+\cdots+s_{r}}  ;} \\
    &\quad\quad\quad\quad\quad\quad\quad\quad\quad\vdots \\
\delta_{r-1} &= {\textstyle \delta_{r-1}^{(-1)}(t-\theta)^{s_{r-1}+s_{r}}+\delta_{r}^{(-1)}H_{s_{r-1}-1}^{(-1)}(t-\theta)^{s_{r-1}+s_{r}}  ;}\\
\delta_{r} &={\textstyle \delta_{r}^{(-1)}(t-\theta)^{s_{r}}+aH_{s_r-1}^{(-1)}(t-\theta)^{s_{r}}   ,}
\end{split}
\end{equation}
where $H_{s_1-1},\cdots, H_{s_r-1}, H_{w-1}$ are Anderson-Thakur polynomials.
\\
Furthermore, if $q-1$ does not divide $\sum s_i$, then we have $$a(\theta)\Gamma_{s_1}\cdots\Gamma_{s_r}\zeta_A(s_1,\cdots, s_r)+b(\theta)\Gamma_{w}\zeta_A(w)=0$$
\end{theorem}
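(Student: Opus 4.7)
The plan is to use Papanikolas's motivic Galois theory together with the Anderson--Thakur $t$-motive description of multizeta values.

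First, I assemble the Anderson--Thakur pre-$t$-motive $M_\mathfrak{s}$ attached to $\mathfrak{s}=(s_1,\ldots,s_r)$: its Frobenius matrix $\Phi_\mathfrak{s}\in\Mat_{r+1}(\overline{K}[t])$ is block-lower-triangular with diagonal entries $(t-\theta)^{s_j+\cdots+s_r}$ and sub-diagonal entries $H_{s_j-1}^{(-1)}(t-\theta)^{s_j+\cdots+s_r}$, and the rigid analytic trivialization $\Psi_\mathfrak{s}$ specializes at $t=\theta$, after normalization by $\widetilde{\pi}^w$, to produce the products $\Gamma_{s_j}\cdots\Gamma_{s_r}\zeta_A(s_j,\ldots,s_r)$. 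In parallel, the Carlitz-type motive $M_w$ with scalar Frobenius $(t-\theta)^w$ yields $\widetilde{\pi}^w$ and $\Gamma_w\zeta_A(w)$ upon specialization. I then form the direct-sum pre-$t$-motive $M=M_\mathfrak{s}\oplus M_w$ with block Frobenius $\Phi$.

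Second, I observe that $\zeta_A(\mathfrak{s})$ is zeta-like if and only if there is a nontrivial $K$-linear relation between $\zeta_A(\mathfrak{s})$ and $\zeta_A(w)$, equivalently a nontrivial $\overline{K}$-linear dependence among the specialized period entries of $\Psi(\theta)$. By the Anderson--Brownawell--Papanikolas linear independence criterion, any such relation lifts to a polynomial row vector $\mathbf{f}=(-a,\delta_r,\ldots,\delta_1,b)\in\overline{K}[t]^{r+2}$ satisfying the Frobenius-difference equation $\mathbf{f}^{(-1)}\Phi=\mathbf{f}$. Unpacking this matrix identity row-by-row against the block-lower-triangular shape of $\Phi$ yields exactly the recursive system (\ref{E:Eqdelta}); the ``corner'' components $a$ and $b$ couple to the $(t-\theta)^w$-block and the trivial block respectively, and the equality $\mathbf{f}^{(-1)}\Phi=\mathbf{f}$ forces $a^{(-1)}=a$ and $b^{(-1)}=b$, hence $a,b\in\FF_q[t]$.

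For the ``furthermore'' clause, when $q-1\nmid w$, Carlitz's formula (Lemma \ref{bernoulli}) implies $\widetilde{\pi}^w\notin K$, so the $\overline{K}$-linear relation at $t=\theta$ produced by ABP must separately annihilate its $\widetilde{\pi}^w$-coefficient and its $K$-rational coefficient; the latter condition is precisely $a(\theta)\Gamma_{s_1}\cdots\Gamma_{s_r}\zeta_A(\mathfrak{s})+b(\theta)\Gamma_w\zeta_A(w)=0$.

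The main obstacle is the converse direction: given a solution $(\delta_1,\ldots,\delta_r,a,b)$ of (\ref{E:Eqdelta}), recover a $K$-linear relation between $\zeta_A(\mathfrak{s})$ and $\zeta_A(w)$. Naive specialization at $t=\theta$ fails because the factors $(t-\theta)^{s_j+\cdots+s_r}$ vanish there; this is handled using the Anderson--Thakur generating-function identity linking $H_n$ to Carlitz logarithms on $\mathbf{C}^{\otimes n}$, which reinterprets $\mathbf{f}$ as producing an $\FF_q[t]$-linear relation among the integral points $\mathbf{u}_\mathfrak{s},\mathbf{v}_\mathfrak{s}$ on the abelian $t$-module $E'$ of \cite{CPY}. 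Yu's sub-$t$-module theorem then translates this motivic relation back to the sought $K$-linear relation among the zeta values.
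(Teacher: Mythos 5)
This statement is not proved in the paper at all: Theorem \ref{CPY} is cited verbatim from Chang--Papanikolas--Yu \cite[Theorems 2.5.2 and 4.4.2]{CPY}, and the paper supplies no argument for it, only the attribution. So there is no ``paper's own proof'' to compare against; the theorem is imported as a black box and then \emph{applied} in Section \ref{zeta-like} to verify Conjectures \ref{LRTconj} and \ref{LKconj}.

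That said, your sketch is a fair summary of the strategy actually used in \cite{CPY} --- build the block lower-triangular pre-$t$-motive from the Anderson--Thakur polynomials, invoke the Anderson--Brownawell--Papanikolas linear-independence criterion to lift a $\overline{K}$-linear relation among specialized periods to a row vector $\mathbf{f}\in\overline{K}[t]^{r+2}$ with $\mathbf{f}^{(-1)}\Phi=\mathbf{f}$, unpack the matrix equation to get the system (\ref{E:Eqdelta}), and go through the abelian $t$-module $E'$ for the converse. It is, however, a high-level outline rather than a proof: the precise form of the rigid analytic trivialization and its specialization at $t=\theta$, the argument that $a$ and $b$ land in $\FF_q[t]$ (this requires identifying the relevant entries of $\mathbf{f}$ as Frobenius-fixed, which depends on where the identity block sits in $\Phi$), and the translation back and forth with $E'(A)$ and Yu's sub-$t$-module theorem are all asserted rather than carried out. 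One concrete inaccuracy: for the ``furthermore'' clause you appeal to ``Carlitz's formula (Lemma \ref{bernoulli})'' to conclude $\widetilde{\pi}^w\notin K$, but Lemma \ref{bernoulli}(a) only applies when $q-1\mid n$; the relevant fact when $q-1\nmid w$ is that $\widetilde{\pi}^w\notin K_\infty$ (because $\widetilde{\pi}$ has a nontrivial $(q-1)$st-root ambiguity over $K_\infty$), and the separation of coefficients in the ABP-produced relation requires a more careful period-matrix bookkeeping than you indicate. None of this is a flaw in your understanding of the route --- it is simply that a full proof would be the content of \cite{CPY} itself, not of the present paper, and the present paper correctly treats it as an external input.
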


\begin{remark}
If $( \delta_{1}, \cdots , \delta_{r}, a, b )$ are solutions of (\ref{E:Eqdelta}), then for any nonzero $f\in \FF_q[t]$, $( f\delta_{1}, \cdots , f\delta_{r}, fa, fb )$ is also a solution of $(\ref{E:Eqdelta})$.
\end{remark}
Basing on this theorem, our strategy for proving given multizeta values to be zeta-like is to actually solve system of Equations \ref{E:Eqdelta} by finding $\delta_{1}, \cdots , \delta_{r}\in \overline{K}[t]$ and $a,b\in \FF_q[t]$.
Since we are interested in tuples $(s_1,\cdots, s_r)$ with $s_i$ of very special $q$-adic \lq\lq shape \rq\rq, solution $(a, \delta_r)$  can be given immediately. Then an inductive procedure is used to go from a solution $(a', \delta_j', \cdots, \delta_r')$ of a subsystem of (\ref{E:Eqdelta}) with $r-j+1$ equations to a solution $(a, \delta_{j-1}, \cdots, \delta_r)$ of a subsystem of (\ref{E:Eqdelta}) with $r-j+2$ equations. This is a content of the following proposition.
\begin{proposition}\label{mainproposition2}
Fix $1\leq p^M\leq q$. For any $r\geq 2$ and $m\in \NN\cup \{0\}$, let $s_i = p^M(q-1)q^{m+i-2}$ for $i=2,\cdots r$.
Let $(a, \delta_2, \cdots, \delta_r)$ be defined as follows:
\begin{equation}\label{S:Solutionsub}
\begin{split}
    f_r&:={\textstyle [2]^{p^Mq^{r+m-3}}\cdots [r-1]^{p^Mq^m} \Gamma_{p^Mq^{m+r-2}(q-1)} ;}\\
    f_i&:={\textstyle  \displaystyle\frac{- f_{i+1}}{[r-i+1]^{p^Mq^{m+i-2}}} \Gamma_{p^Mq^{m+i-2}(q-1)} \ \mbox{for}\ j\leq i<r;}\\
   \delta_{i} &:= {\textstyle f_i|_{\theta =t} [(t-\theta)\cdots(t-\theta^{\frac{1}{q^{r-i}}})]^{p^Mq^{r+m-1}} ;} \\
 a&:={\textstyle -\left[[1]^{p^Mq^{r+m-2}}[2]^{p^Mq^{r+m-3}}\cdots [r-1]^{p^Mq^m}\right]|_{\theta =t} .}
\end{split}
\end{equation}
\\
Then for any $j$ with $2\leq j< r$, the system of equations
\begin{equation}\label{E:Eqdeltasub}
\begin{split}
   \delta_{j} &= {\textstyle \delta_{j}^{(-1)}(t-\theta)^{s_{j}+\cdots+s_{r}}+\delta_{j+1}^{(-1)}H_{s_j-1}^{(-1)}(t-\theta)^{s_{j}+\cdots+s_{r}}  ;} \\
    &\quad\quad\quad\quad\quad\quad\quad\quad\quad\vdots \\
\delta_{r-1} &= {\textstyle \delta_{r-1}^{(-1)}(t-\theta)^{s_{r-1}+s_{r}}+\delta_{r}^{(-1)}H_{s_{r-1}-1}^{(-1)}(t-\theta)^{s_{r-1}+s_{r}}  ;}\\
\delta_{r} &={\textstyle \delta_{r}^{(-1)}(t-\theta)^{s_{r}}+aH_{s_r-1}^{(-1)}(t-\theta)^{s_{r}}   .}
\end{split}
\end{equation}
\\
can be solved explicitly with $(a, \delta_j, \cdots, \delta_r)$ given by $(\ref{S:Solutionsub})$.
\end{proposition}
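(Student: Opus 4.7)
The plan is to verify each of the $r-j+1$ equations of (\ref{E:Eqdeltasub}) independently, taking advantage of the fact that each equation couples only $\delta_i$ with its neighbor ($\delta_{i+1}$ for $i<r$, or $a$ for $i=r$). The choice of $\delta_i$ has been engineered precisely so that the Frobenius twist produces extensive cancellation; in particular one exploits the telescoping identity $s_i+\cdots+s_r = p^Mq^{r+m-1} - p^Mq^{m+i-2}$, which explains the common factor $p^Mq^{r+m-1}$ appearing as the exponent in every $\delta_i$.

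For a generic $i$ with $j\le i<r$, first I compute $\delta_i^{(-1)}$: the Frobenius twist fixes $f_i|_{\theta=t}\in\FF_q[t]$ and shifts each $(t-\theta^{1/q^k})^{p^Mq^{r+m-1}}$ to $(t-\theta^{1/q^{k+1}})^{p^Mq^{r+m-1}}$. Factoring out the common piece $f_i|_{\theta=t}\prod_{k=1}^{r-i}(t-\theta^{1/q^k})^{p^Mq^{r+m-1}}$ from $\delta_i-\delta_i^{(-1)}(t-\theta)^{s_i+\cdots+s_r}$ leaves the bracket $(t-\theta)^{p^Mq^{m+i-2}} - (t-\theta^{1/q^{r-i+1}})^{p^Mq^{r+m-1}}$. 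The key simplification is that $(t-\theta^{1/q^{r-i+1}})^{p^Mq^{r+m-1}} = (t^{q^{r-i+1}}-\theta)^{p^Mq^{m+i-2}}$, obtained by raising to the $q^{r-i+1}$-th power first; since $p^Mq^{m+i-2}$ is a power of the characteristic $p$, expanding both terms collapses the bracket to the $\theta$-free polynomial $t^{p^Mq^{m+i-2}} - t^{p^Mq^{r+m-1}}$. Matching with $\delta_{i+1}^{(-1)}H_{s_i-1}^{(-1)}(t-\theta)^{s_i+\cdots+s_r}$ and plugging in $f_i/f_{i+1} = -\Gamma_{s_i}/[r-i+1]^{p^Mq^{m+i-2}}$ together with $[r-i+1]^{p^Mq^{m+i-2}}|_{\theta=t} = t^{p^Mq^{r+m-1}} - t^{p^Mq^{m+i-2}}$ reduces the whole verification, after cancellation, to the single identity $H_{s_i-1} = \Gamma_{s_i}|_{\theta=t}$. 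The endpoint case $i=r$ is handled by the same computation: the explicit value of $a$ has been chosen precisely so that $a/(-f_r|_{\theta=t})$ reproduces the ratio that would arise from $\delta_{r+1}^{(-1)}/f_{r+1}|_{\theta=t}$ in the generic pattern above.

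The main obstacle is therefore the identity $H_{s_i-1} = \Gamma_{s_i}|_{\theta=t}$ for $s_i = p^M(q-1)q^{m+i-2}$, i.e., the assertion that the relevant Anderson-Thakur polynomial is $\theta$-independent. For $p^M=q$ this follows directly from Theorem \ref{mainproposition} with $n=m+i$, $N_{m+i-1}=1$ and all other $N_j=0$, which makes the product $\prod_{i=1}^{n-1}(t-\theta^{q^i})^{\cdots}$ on the right-hand side trivial; the case $p^M=1$ is analogous with $n=m+i-1$. For intermediate $1<p^M<q$, however, Theorem \ref{mainproposition} cannot be invoked: any way of writing $s_i-1 = q^n-\sum N_jq^j-1$ forces $\sum N_j \ge q$, violating its hypothesis. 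This missing identity is the genuine technical content and must be established separately, modelled on Proposition \ref{example} and the proof of Theorem \ref{mainproposition}: iterate Lemma \ref{inductionhn} with the reduction $\underline{a}\mapsto\tilde{\tilde{\underline{a}}}$ and invoke Theorem \ref{Lucas} to enumerate the admissible $q$-power weighted partitions of $s_i-1$, then show that every contribution carrying a factor $(t-\theta^{q^k})$ with $k\ge 1$ cancels in the sum. Once this auxiliary $\theta$-independence lemma is in hand, Proposition \ref{mainproposition2} follows from the twist computation above by routine algebra.
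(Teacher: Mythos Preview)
Your proposal is correct and follows essentially the same approach as the paper: first establish $H_{s_i-1}=\Gamma_{s_i}|_{\theta=t}$ for every $i$, then verify each equation of (\ref{E:Eqdeltasub}) by exactly the Frobenius-twist computation you describe, reducing everything to the identity $-f_i|_{\theta=t}\,[r-i+1]^{p^Mq^{m+i-2}}|_{\theta=t}=f_{i+1}|_{\theta=t}\,\Gamma_{s_i}|_{\theta=t}$. The only place the paper is slightly sharper is the $1<p^M<q$ case of the auxiliary identity: rather than arranging a cancellation of $(t-\theta^{q^k})$-terms, the paper iterates the reduction $\underline{a}\mapsto\tilde{\tilde{\underline{a}}}$ (using $p^Mq^I(q-1)-1\equiv q-1\pmod q$ for $I\ge 1$) to obtain $H_{p^Mq^{m+i-2}(q-1)-1}/\Gamma|_{\theta=t}=(H_{p^M(q-1)-1}/\Gamma|_{\theta=t})^{q^{m+i-2}}$, and then checks by a short digit count that the \emph{only} admissible partition of $p^M(q-1)-1$ is the trivial one $(p^M(q-1)-1,0,0,\ldots)$, so the ratio equals $1$ with no cancellation needed.
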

\begin{remark}\label{f2}\
It follows from the recursive definition of $f_i$ that
$$\displaystyle f_2 =(-1)^r\Gamma_{p^M(q-1)q^m}\cdots \Gamma_{p^M(q-1)q^{m+r-2}}.$$
\end{remark}
\begin{proof}
By Theorem \ref{mainproposition}, we see that for $2\leq i \leq r$ and $p^M=1$ or $q$,
$$H_{s_i-1} = \Gamma_{s_i}|_{\theta =t} = \Gamma_{p^Mq^{m+i-2}(q-1)}.$$
\\
For $1<p^M < q$, since $p^Mq^{I}(q-1)-1 \equiv q-1$ if $I\geq 1$, by taking $\tilde{\underline{a}}= (q-1, 0, 0,\cdots )$ we have the following reduction:
\\
\\
$$\begin{aligned}
&\tilde{\tilde{\underline{a}}}: S_{p^Mq^{m+i-2}(q-1)-1} \rightarrow S_{p^Mq^{m+i-3}(q-1)-1}\\
&\tilde{\tilde{\underline{a}}}^{2}: S_{p^Mq^{m+i-3}(q-1)-1} \rightarrow S_{p^Mq^{m+i-4}(q-1)-1}\\
&\hspace{120pt} \vdots \\
&\tilde{\tilde{\underline{a}}}^{m+i-2}: S_{p^Mq(q-1)-1} \rightarrow S_{p^M(q-1)-1}.
\end{aligned}$$
\\
Here $\tilde{\tilde{\underline{a}}}^{n}$ means the iteration of $\tilde{\tilde{\underline{a}}}$ by $n$ times. Using this sequence of reduction maps we
have
$$\frac{H_{p^Mq^{m+i-2}(q-1)-1}}{\Gamma_{p^Mq^{m+i-2}(q-1)}|_{\theta =t}} =  (\frac{H_{p^M(q-1)-1}}{\Gamma_{p^M(q-1)}|_{\theta =t}})^{q^{m+i-2}}.$$
\\
Write $p^Mq-p^M-1 = a_0+a_1q$. Then $a_0 \equiv q-1-p^M \mod q$ and hence $a_0 \equiv 0, p^M \mod q$ with  $0\leq a_1 <p^M$. This implies $C_{\underline{a}}\ne 0$ if and only if
$(a_0 ,a_1, \cdots) = (p^Mq-p^M-1, 0, 0, \cdots)$. Hence $\displaystyle\frac{H_{p^M(q-1)-1}}{\Gamma_{p^M(q-1)}|_{\theta =t}}$ equals $1$ and so does $\displaystyle\frac{H_{p^Mq^{m+i-2}(q-1)-1}}{\Gamma_{p^Mq^{m+i-2}(q-1)}|_{\theta =t}}$.
\\
\\
$(i)$ $\delta_{r} =\delta_{r}^{(-1)}(t-\theta)^{s_{r}}+aH_{s_r-1}^{(-1)}(t-\theta)^{s_{r}} $
\\
$$\begin{aligned}
\delta_{r}^{(-1)}(t-\theta)^{s_{r}}+aH_{s_r-1}^{(-1)}(t-\theta)^{s_{r}}&=
[f_r|_{\theta =t}(t-\theta^{\frac{1}{q}})^{p^Mq^{r+m-1}}+a\Gamma_{p^Mq^{m+r-2}(q-1)}|_{\theta =t}](t-\theta)^{p^M(q-1)q^{m+r-2}}\\
&=[f_r|_{\theta =t}(t-\theta^{\frac{1}{q}})^{p^Mq^{r+m-1}}-f_r[1]^{p^Mq^{r+m-2}}](t-\theta)^{p^M(q-1)q^{m+r-2}}\\
&=f_r(t-\theta)^{p^Mq^{m+r-2}}(t-\theta)^{p^M(q-1)q^{m+r-2}} = \delta_r.
\end{aligned}$$
\\
\\
$(ii)$ $\delta_{i} =\delta_{i}^{(-1)}(t-\theta)^{s_i+\cdots + s_{r}}+\delta_{i+1}H_{s_i-1}^{(-1)}(t-\theta)^{s_i+\cdots +s_{r}} $
\\
$$\begin{aligned}
&\ \delta_{i}^{(-1)}+ \delta_{i+1}^{(-1)}H_{s_{i}-1}^{(-1)}\\
&=f_i|_{\theta =t}[(t-\theta^{\frac{1}{q}})\cdots(t-\theta^{\frac{1}{q^{r+1-i}}})]^{p^Mq^{m+r-1}} +f_{i+1}|_{\theta =t}[(t-\theta^{\frac{1}{q}})\cdots(t-\theta^{\frac{1}{q^{r-i}}})]^{p^Mq^{m+r-1}} H_{p^M(q-1)q^{m+i-2}-1}^{(-1)}\\
&=[(t-\theta^{\frac{1}{q}})\cdots(t-\theta^{\frac{1}{q^{r-i}}})]^{p^Mq^{m+r-1}}[ f_i|_{\theta =t}(t-\theta^{\frac{1}{q^{r+1-i}}})^{p^Mq^{m+r-1}}+ f_{i+1}|_{\theta =t} \Gamma_{p^Mq^{m+i-2}(q-1)}|_{\theta =t}]\\
&=[(t-\theta^{\frac{1}{q}})\cdots(t-\theta^{\frac{1}{q^{r-i}}})]^{p^Mq^{m+r-1}}[ f_i|_{\theta =t}(t-\theta^{\frac{1}{q^{r+1-i}}})^{p^Mq^{m+r-1}}- f_{i}|_{\theta =t} [r-i+1]^{p^Mq^{m+i-2}}|_{\theta =t}]\\
&=[(t-\theta^{\frac{1}{q}})\cdots(t-\theta^{\frac{1}{q^{r-i}}})]^{p^Mq^{m+r-1}}f_{i}|_{\theta =t} [(t^{p^Mq^{m+r-1}}-\theta^{p^Mq^{m+i-2}})-(t^{p^Mq^{m+r-1}}-t^{p^Mq^{m+i-2}})]\\
&=[(t-\theta^{\frac{1}{q}})\cdots(t-\theta^{\frac{1}{q^{r-i}}})]^{p^Mq^{m+r-1}}f_{i}|_{\theta =t}(t-\theta)^{{p^Mq^{m+i-2}}}.
\end{aligned}$$
\\
\\
Hence
\\
$$\begin{aligned}
[\delta_{i}^{(-1)}+ \delta_{i+1}^{(-1)}H_{s_{i}-1}^{(-1)}](t-\theta)^{s_i+\cdots+s_r} &= [(t-\theta^{\frac{1}{q}})\cdots(t-\theta^{\frac{1}{q^{r-i}}})]^{p^Mq^{m+r-1}}f_{i}|_{\theta =t}(t-\theta)^{p^Mq^{m+i-2}}(t-\theta)^{p^M(q^{r-i+1}-1)q^{m+i-2}}\\
&= \delta_i
\end{aligned}$$
\\
\end{proof}
\

Now we begin to prove Conjecture \ref{LRTconj} (b) and Conjecture \ref{LKconj} (a).
\begin{theorem}\label{mainthm3} Suppose that $q>2$.
\\
\\
$(a)$ $\zeta_A(1,q^2-1,(q-1)q^2,\cdots,(q-1)q^{n+1})$ is zeta-like. In particular, we  have
$$\zeta_A(1,q^2-1,(q-1)q^2,\cdots,(q-1)q^{n+1}) = \displaystyle\frac{(-1)^{n+1}([n+2]-1)}{[1][n+2]}\frac{1}{[1]^{q^{n+1}}\cdots [n]^{q^2}}\zeta_A(q^{n+2}).$$
\\
$(b)$ For $n\geq 1$,  $r\geq 2$ and $1\leq p^m\leq q$, consider $N_i\in \NN \cup \{0\}$ for $0\leq i \leq n-1$ such that  $1\leq \sum N_i \leq q-1$. If
$(q-1)(q^n-\sum N_iq^i)\leq p^m(q-1)q^{n-1}$, then
$\zeta_A(q^n-\sum N_iq^i,p^m(q-1)q^{n-1},\cdots, p^m(q-1)q^{n+r-3})$ is zeta-like. In particular, if $q-1$ does not divide $q^n-\sum N_iq^i$, then we  have
\\
$$\begin{aligned}
&\ \zeta_A(q^n-\sum N_iq^i, p^m(q-1)q^{n-1},\cdots,p^m(q-1)q^{n+r-3})\\
&= \displaystyle\frac{(-1)^{(r-1)(1+\sum N_i)}L_{n+r-2}^{N_0q^0}\cdots L_{r-1}^{(N_{n-1}-(q-p^m))q^{n-1}}}{[1]^{p^mq^{n+r-3}} [2]^{p^mq^{n+r-4}} \cdots [r-1]^{p^mq^{n-1}}L_{n-1}^{N_0q^0}\cdots L_1^{N_{n-2}q^{n-2}}}\zeta_A(p^mq^{n+r-2}-p^mq^{n-1}+q^n-\sum N_iq^i).
\end{aligned}$$

\end{theorem}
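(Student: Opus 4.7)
The plan is to apply the Chang--Papanikolas--Yu criterion (Theorem \ref{CPY}) by exhibiting explicit polynomials $\delta_1, \ldots, \delta_r \in \oK[t]$ and $a, b \in \FF_q[t]$ solving the system (\ref{E:Eqdelta}). Since $q - 1$ fails to divide the weight $w = q^{n+2}$ in (a), and the weight in (b) is assumed not divisible by $q-1$, the last statement of Theorem \ref{CPY} then produces the rational ratio $\zeta_A(s_1,\ldots,s_r)/\zeta_A(w) = -b(\theta)\Gamma_w/(a(\theta)\Gamma_{s_1}\cdots\Gamma_{s_r})$, and simplification should yield the stated formulas.

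For the tail subsystem (the equations for $\delta_2, \ldots, \delta_r$), I would invoke Proposition \ref{mainproposition2} directly. In part (b), the tuple $(s_2, \ldots, s_r)$ has the exact form $p^M(q-1)q^{m+i-2}$ required, with $p^M = p^m$ and the proposition's parameter $m$ set to $n-1$; applying the proposition with $j = 2$ produces $\delta_2,\ldots,\delta_r$ and $a$ at once. In part (a), the exponents $s_3, \ldots, s_{n+2}$ have the form $(q-1)q^{i-1}$, so that Proposition \ref{mainproposition2} (with $p^M = 1$ and a suitable shift of $m$) solves the sub-subsystem for $\delta_3, \ldots, \delta_{n+2}$ and $a$; the remaining equation for $\delta_2$ involves $H_{q^2-2}$, which Theorem \ref{mainproposition} gives explicitly as $-\Gamma_{q^2-1}|_{\theta=t}(t-\theta^q)/L_1|_{\theta=t}$, and I would solve this equation directly for $\delta_2$.

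For the head equation
\[
\delta_1 = \delta_1^{(-1)}(t-\theta)^w + \delta_2^{(-1)} H_{s_1-1}^{(-1)}(t-\theta)^w + b H_{w-1}^{(-1)}(t-\theta)^w,
\]
the key inputs are explicit formulas for $H_{s_1-1}$ and $H_{w-1}$, both to be computed via Theorem \ref{mainproposition}. In part (b) the index $s_1 - 1 = q^n - \sum N_i q^i - 1$ is directly in the form handled by that theorem. For $H_{w-1}$, the hypothesis $s_1 \leq p^m q^{n-1}$ forces $N_{n-1} \geq q - p^m$, which lets me rewrite $w = q^{n+r-1} - (q-p^m)q^{n+r-2} - S'$, where $S' = \sum N_i q^i - (q-p^m)q^{n-1} \geq 0$ has digit sum $\leq p^m - 1$; hence the total digit sum of the subtrahend is $\leq q-1$ and Theorem \ref{mainproposition} applies. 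I would then show that $\delta_2^{(-1)} H_{s_1-1}^{(-1)}$ and $H_{w-1}^{(-1)}$ share a large common product of factors $(t - \theta^{1/q^j})$, and choose $b \in \FF_q[t]$ so that the residual part becomes a Frobenius twist of something in $\oK[t]$, which then defines $\delta_1$.

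The main obstacle is this final cancellation. After extracting the common factor, the identity to be verified is a polynomial identity over $\oK[t]$ involving sums indexed by subsets of $\{1, \ldots, n+r-2\}$, and I expect that the vanishing property $\Psi_{M-1}(t^j) = 0$ for $j < M-1$ from Proposition \ref{ATbinomial}, used already in the inductive step of Theorem \ref{mainproposition}, is the key to collapsing this sum. The factor $([n+2]-1)$ in part (a) and its analogue $L_{n+r-2}^{N_0}\cdots L_{r-1}^{(N_{n-1}-(q-p^m))q^{n-1}}$ in part (b) should emerge as the residual term when this vanishing reduces the $\Psi_{M-1}$-sum to a handful of surviving monomials. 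Once $a$ and $b$ are pinned down, substituting into the ratio formula from Theorem \ref{CPY} and simplifying via $L_i = [i][i-1]\cdots[1]$ and the definition of $\Gamma_n$ should give the stated formulas after checking signs from the exponent $\sum(n+r-2-i)M_i q^i$ in Theorem \ref{mainproposition}.
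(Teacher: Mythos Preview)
Your overall strategy matches the paper's: apply Theorem~\ref{CPY}, use Proposition~\ref{mainproposition2} for the tail $(\delta_2,\ldots,\delta_r,a)$ in part~(b) (and for $(\delta_3,\ldots,\delta_{n+2},a)$ in part~(a), then handle $\delta_2$ by hand via $H_{q^2-2}$), and use Theorem~\ref{mainproposition} to compute $H_{s_1-1}$ and $H_{w-1}$. Your rewriting of $w$ as $q^{n+r-1}-\sum N_i'q^i$ with $\sum N_i'\le q-1$ is also exactly what the paper does.

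The gap is in the head equation. You describe choosing $b$ so that the ``residual part becomes a Frobenius twist of something in $\oK[t]$, which then defines $\delta_1$,'' and you anticipate a subset-sum identity collapsed by the $\Psi_{M-1}$ vanishing of Proposition~\ref{ATbinomial}. The paper does neither. Instead it \emph{writes down} $\delta_1$ explicitly as a telescoping sum: set $F_0=bH_{w-1}^{(-1)}(t-\theta)^w$, $F_i=F_{i-1}^{(-1)}(t-\theta)^w$, and $\delta_1=\sum_{j=0}^{r-2}F_j$. Then $\delta_1^{(-1)}(t-\theta)^w=\sum_{j=1}^{r-1}F_j$, so the head equation collapses to the single condition
\[
F_{r-1}+f_1\,\delta_2^{(-1)}H_{s_1-1}^{(-1)}(t-\theta)^w=0,
\]
where $f_1\in\FF_q[t]$ is a scalar adjusting the tail solution. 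Since $F_{r-1}$ involves $H_{w-1}^{(-r)}$ (not $H_{w-1}^{(-1)}$ as you wrote), after $r$ twists the product $\prod(t-\theta^{q^i})$ in Theorem~\ref{mainproposition} becomes $\prod(t-\theta^{q^{i-r}})$, which factors as $H_{s_1-1}^{(-1)}$ times exactly the $\prod_{i=1}^{r-1}(t-\theta^{1/q^i})^{p^mq^{n+r-2}}$ appearing in $\delta_2^{(-1)}$. One then reads off $b=-f_2|_{\theta=t}B_1$ and $f_1=B_2$ (the $B_i$ being the leading constants from Theorem~\ref{mainproposition}) by inspection. No $\Psi_{M-1}$ argument or subset sum is needed; the telescoping ansatz for $\delta_1$ is the missing idea.
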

\
\\
\begin{remark}\label{coverLT}
The zeta-like part of Theorem $3.1$ $(1)$ in \cite{LRT} is a special case of Theorem \ref{mainthm3} $(b)$ by taking $r=2$. Also, the zeta-like part of Theorem $3.2$ in \cite{LRT} is a special case of Theorem \ref{mainthm3} $(b)$  by taking $n=0, N_i=0$.
\end{remark}

\begin{proof}
\
\\
\\
$(a)$ Let $w=q^{n+2}$. Consider $p^M=1$, $m=1$ and $r=n+2$ in Proposition \ref{mainproposition2}. Let $s_1=1$, $s_2=q^2-1$ and $s_i =(q-1)q^{i-1}$ for $3\leq i\leq n+2$.
If $n+2\geq 3$, we choose $f_i\in A$, $\delta_i\in \overline{K}[t]$ and $a\in \FF_q[t]$ the same as in Proposition \ref{mainproposition2} when $3\leq i\leq n+2$.
Then by Proposition \ref{mainproposition2}, $\delta_i$ and $a$ satisfy subsystem of equations (\ref{S:Solutionsub}) for $j=3$.
If $n+2=2$, we define $\delta_3 = a = -[1]^{q}|_{\theta =t}$.
Now we let
\\
\\
$\begin{aligned}
\delta_2&=(-1)^n [n+1]^q|_{\theta =t} \Gamma_{w}|_{\theta =t} [(t-\theta)\cdots(t-\theta^{\frac{1}{q^{n}}})]^{q^{n+2}}(t-\theta^q) \in A[t].\\
f&=\{-[n+2]L_1L_2\cdots L_{n+1}\}|_{\theta =t}\\
b&=(-1)^n([n+2]-1)|_{\theta =t}[n+1]|_{\theta =t}^q.
\end{aligned}$
\
\\
\\
We further put
\\
\\
$\begin{aligned}
&B=(-1)^{n+1}[n+1]^q|_{\theta =t}\Gamma_{w}|_{\theta =t}\\
&F_0=B(t-\theta)^{w},\\
&F_1=B(t-\theta)^{w}(t-\theta^{\frac{w}{q}}),\\
&F_{i}=F_{i-1}^{(-1)}(t-\theta)^w \ \ \ \ \mbox{for}\ i=2,\cdots n+1.
\end{aligned}$
\\
and let $\delta_1 = \displaystyle\sum_{j=0}^{n+1}F_j$. From this recursive formula we can see that $\delta_2 = -F_{n+1}$
Our goal is to prove that  $(fa,b,\delta_1,\delta_2,f\delta_3,\cdots,f\delta_{n+2})$ satisfies the system of equations (\ref{E:Eqdelta}).
\\
\\
Part I:
$\delta_2 = \delta_{2}^{(-1)}(t-\theta)^{s_{2}+\cdots+s_{n+2}}+ f\delta_{3}^{(-1)}H_{s_{2}-1}^{(-1)}(t-\theta)^{s_{2}+\cdots+s_{n+2}}$
\
\\
\\
Note that by Theorem \ref{mainproposition}, $$H_{s_{2}-1} = H_{q^{2}-2} = \Gamma_{q^2-1}\frac{-(t-\theta^q)}{L_1}= D_1^{q-2}(\theta-t).$$ Hence we have
\\
\\
$\begin{aligned}
&\ \delta_{2}^{(-1)}(t-\theta)^{s_{2}+\cdots+s_{n+2}}+ f\delta_{3}^{(-1)}H_{s_{2}-1}^{(-1)}(t-\theta)^{s_{2}+\cdots+s_{n+2}}\\
&=(t-\theta)^{q^{n+2}-1}(t-\theta^{\frac{1}{q}})^{q^{n+2}}\cdots(t-\theta^{\frac{1}{q^{n}}})^{q^{n+2}}\left[-B(t-\theta)(t-\theta^{\frac{1}{q^{n+1}}})^{q^{n+2}} +ff_3|_{\theta =t}(\theta-t)D_1^{q-2}|_{\theta =t}\right]\\
\end{aligned}$
\
\\
\\
Since $ff_3|_{\theta =t}=(-1)^{n}[n+2][n+1]^q|_{\theta =t}(D_{n+1}D_{n}\cdots D_{2})^{q-1}L_1=\displaystyle\frac{-B[n+2]|_{\theta=t}}{D_1^{q-2}|_{\theta=t}},$ we have
\
\\
\\
$\begin{aligned}
&\ \delta_{2}^{(-1)}(t-\theta)^{s_{2}+\cdots+s_{n+2}}+ f\delta_{3}^{(-1)}H_{s_{2}-1}^{(-1)}(t-\theta)^{s_{2}+\cdots+s_{n+2}}\\
&=-B (t-\theta)^{q^{n+2}}[(t-\theta^{\frac{1}{q}})\cdots(t-\theta^{\frac{1}{q^{n}}})]^{q^{n+2}}\left[(t-\theta^{\frac{1}{q^{n+1}}})^{q^{n+2}}- [n+2]\right]\\
&=\delta_{2}.
\end{aligned}$
\
\\
\\
Part II : $\delta_{1} = \delta_{1}^{(-1)}(t-\theta)^{w}+\delta_{2}^{(-1)}H_{s_1-1}^{(-1)}(t-\theta)^{w}+bH_{w-1}^{(-1)}(t-\theta)^{w}$. \
\\
\\
$\begin{aligned}
&\delta_{1}^{(-1)}(t-\theta)^{w}+\delta_{2}^{(-1)}H_{s_1-1}^{(-1)}(t-\theta)^{w}+bH_{w-1}^{(-1)}(t-\theta)^{w}\\
&=\displaystyle\sum_{j=0}^{n+1}F_j^{(-1)}(t-\theta)^{w}-F_{n+1}^{(-1)}(t-\theta)^{w}+(-1)^n([n+2]-1)|_{\theta =t}[n+1]|_{\theta =t}^q\Gamma_{w}|_{\theta=t}(t-\theta)^{w}\\
&=F_0^{(-1)}(t-\theta)^{w}\displaystyle\sum_{j=2}^{n+1}F_{j}-B([n+2]-1)|_{\theta =t}(t-\theta)^{w}\\
&=B(t-\theta)^{w}(t-\theta^{\frac{1}{q}})^{w}+\delta_1-F_0-F_1-B[n+2]|_{\theta =t}(t-\theta)^{w}+F_0\\
&=\delta_1+B(t-\theta)^{w}\left[(t-\theta^{\frac{1}{q}})^{w}-(t-\theta^{\frac{w}{q}})-[n+2]|_{\theta =t}\right]=\delta_1.
\end{aligned}$
\
\\

Finally,  we compute the following $\Gamma$-functions.
\\
\\
$\begin{aligned}
&\Gamma_{w}=(D_1D_2\cdots D_{n+1})^{q-1}\\
&\Gamma_{q^2-1}=D_1^{q-1}\\
&\Gamma_{(q-1)q^{i}}=\frac{D_i^{q-1}}{L_i}\ \mbox{for}\ i=2,\cdots,n+1.
\end{aligned}$
\\
Since $q-1$ does not divide $\sum s_i$, we have
\\
$$\begin{aligned}
\zeta_A(1,q^2-1,q^2(q-1),\cdots, q^{n+1}(q-1)) &= \frac{-b(\theta)\Gamma_{w}}{f(\theta)a(\theta)\Gamma_{1}\Gamma_{q^{2}-1}\Gamma_{(q-1)q^{2}}\cdots\Gamma_{(q-1)q^{n+1}}}\zeta_A(w)\\
&=\frac{(-1)^{n+1}([n+2]-1)[n+1]^qL_2\cdots L_{n+1}}{[n+2]L_1L_2\cdots L_{n+1}[1]^{q^{n}}[2]^{q^{n-1}}\cdots [n+1]^{q}}\\
&=\frac{(-1)^{n+1}([n+2]-1)}{[n+2]L_1[1]^{q^{n}}[2]^{q^{n-1}}\cdots [n]^{q^2}}
\end{aligned}$$
\\
\\
(b) Let $w=s_1+\cdots+s_r=p^mq^{n+r-2}-p^mq^{n-1}+q^n-\sum N_iq^i$. For $b\in \FF_q[t]$, we put
\\
\\
$\begin{aligned}
F_0&=bH_{w-1}^{(-1)}(t-\theta)^w\\
F_i&=F_{i-1}^{(-1)}(t-\theta)^w \ \ \ \ \mbox{for}\ i=1,\cdots r-1\\
\delta_1&=\sum_{j=0}^{r-2} F_j \in \overline{K}[t].\\.
\end{aligned}$
\\
Then
\\
\\
$\begin{aligned}
&\delta_{1}^{(-1)}(t-\theta)^{w}+f_1\delta_{2}^{(-1)}H_{s_1-1}^{(-1)}(t-\theta)^{w}+bH_{w-1}^{(-1)}(t-\theta)^{w}\\
&=\sum_{j=0}^{r-2} F_j^{(-1)}(t-\theta)^{w}+\delta_{2}^{(-1)}H_{s_1-1}^{(-1)}(t-\theta)^{w}+F_0
\\
&=\delta_1+F_{r-1}+ f_1\delta_{2}^{(-1)}H_{s_1-1}^{(-1)}(t-\theta)^{w}.\\
\end{aligned}$
\\
\\
It suffices to show that there exists nonzero $b,f_1\in \FF_q[t]$ such that
$$F_{r-1}+ f_1\delta_{2}^{(-1)}H_{s_1-1}^{(-1)}(t-\theta)^{w}= 0,$$ or equivalently,
$$f_1\delta_{2}^{(-1)}H_{s_1-1}^{(-1)}(t-\theta)^{w}=-bH_{w-1}^{(-r)}(t-\theta)^w\cdots(t-\theta^{\frac{1}{q^{r-1}}})^w .$$
\\
Here $f_1$ plays the role adjusting the solution $(\delta_2,\cdots,\delta_r,a)$ in (\ref{S:Solutionsub}).
Note that  the condition $(q-1)s_1\leq s_2$ is equivalent to
$$q^{n-1}(q-p^m) \leq \sum_{i=0}^{n-1} N_iq^i,$$ which implies $N_{n-1}\geq q-p^m$. Therefore we can rewrite the weight as
$$w= q^{n+r-1}- (q-p^m)q^{n+r-2} -(N_{n-1}-(q-p^m))q^{n-1}-\sum_{i=0}^{n-2}N_iq^i= q^{n+r-1}-\sum_{i=0}^{n+r-2}N_i'q^i,$$
where
\\
$$\begin{aligned}
&N_{n+r-2}'=q-p^m, N_j' = 0 \ \mbox{for}\ n\leq j\leq n+r-3,\\
&N_{n-1}'=N_{n-1}-(q-p^m), N_j'=N_j\  \mbox{for}\ 0\leq j\leq n-2.
\end{aligned}$$
\\
Moreover,
$0\leq\sum_{i=0}^{n+r-2} N_i' = q-p^m+N_{n-1}-(q-p^m)+\sum_{i=0}^{n-2}N_i =\sum_{i=0}^{n-1}N_i \leq q-1$.
By Theorem \ref{mainproposition},
\
\\
$$\begin{aligned}
&H_{s_1-1}=H_{q^n-\sum N_i q^i -1}=B_1\prod_{i=1}^{n-1} (t-\theta^{q^i})^{\sum_{j=0}^{n-1-i} N_jq^j},\\
&H_{w-1}=H_{q^{n+r-1}-\sum N_iq^i-1}=B_2\prod_{i=1}^{n+r-2} (t-\theta^{q^i})^{\sum_{j=0}^{n+r-2-i} N_j'q^j},
\end{aligned}$$
\\
where
$$\begin{aligned}
B_1&=\Gamma_{q^n-\sum N_i q^i}|_{\theta =t} \frac{(-1)^{\sum_{i=0}^{n-2} (n-1-i)N_iq^i}}{\prod_{i=0}^{n-2}L_{n-1-i}|_{\theta =t}^{N_iq^i}},\\
B_2&=\Gamma_{q^{n+r-1}-\sum N_i'q^i}|_{\theta =t}\frac{(-1)^{\sum_{i=0}^{n-1} (n+r-2-i)N_i'q^i}}{\prod_{i=0}^{n-1}L_{n+r-2-i}|_{\theta =t}^{N_i'q^i}}.
\end{aligned}$$
\\
Then
\\
$$\begin{aligned}
H_{s_1-1}^{(-1)}&=B_1\prod_{i=0}^{n-2} (t-\theta^{q^{i}})^{\sum_{j=0}^{n-2-i} N_jq^j},\\
H_{w-1}^{(-r)}&=B_2\prod_{i=1}^{n+r-2} (t-\theta^{q^{i-r}})^{\sum_{j=0}^{n+r-2-i} N_j'q^j},\\
&=B_2\prod_{i=1}^{r-1} (t-\theta^{\frac{1}{q^{i}}})^{\sum_{j=0}^{n-1} N_j'q^j}\prod_{i=0}^{n-2} (t-\theta^{q^{i}})^{\sum_{j=0}^{n-2-i} N_j'q^j}\\
&=\frac{B_2}{B_1}H_{s_1-1}^{(-1)}\prod_{i=1}^{r-1} (t-\theta^{\frac{1}{q^{i}}})^{\sum_{j=0}^{n-1} N_j'q^j}\\
\delta_2^{(-1)}&=f_2|_{\theta =t} [(t-\theta^{\frac{1}{q}})\cdots(t-\theta^{\frac{1}{q^{r-1}}})]^{p^mq^{n+r-2}}.
\end{aligned}$$
\\
Hence
$$\begin{aligned}
&f_1\delta_{2}^{(-1)}H_{s_1-1}^{(-1)}(t-\theta)^{w}+bH_{w-1}^{(-r)}(t-\theta)^w\cdots(t-\theta^{\frac{1}{q^{r-1}}})^w\\
&=(t-\theta)^{w}H_{s_1-1}^{(-1)}[(t-\theta^{\frac{1}{q}})\cdots(t-\theta^{\frac{1}{q^{r-1}}})]^{p^mq^{n+r-2}}[f_1f_2|_{\theta =t}+b\frac{B_2}{B_1}].
\end{aligned}$$
So the equation of $\delta_1$ will be solved if we take $b=-f_2|_{\theta =t}B_1$ and $f_1=B_2$.

Finally, since $q-1$ does not divide $\sum s_i$, we can apply $f_2$ in Remark \ref{f2} and get
\\
$$\begin{aligned}
&\zeta_A(q^n-\sum N_iq^i, p^m(q-1)q^{n-1},\cdots,p^m(q-1)q^{n+r-3})\\
&=\frac{-b(\theta)\Gamma_{q^{n+r-1}-\sum N_i'q^i}}{f_1(\theta)a(\theta)\Gamma_{q^n-\sum N_iq^i}\Gamma_{p^m(q-1)q^{n-1}}\cdots \Gamma_{p^m(q-1)q^{n+r-3}}}\zeta_A(q^{n+r-1}-\sum N_i'q^i)\\
&=\frac{f_2B_1(\theta)\Gamma_{q^{n+r-1}-\sum N_i'q^i}}{B_2(\theta)\Gamma_{q^n-\sum N_iq^i}\Gamma_{p^m(q-1)q^{n-1}}\cdots \Gamma_{p^m(q-1)q^{n+r-3}}(-1)[1]^{p^mq^{n+r-3}}\cdots [r-1]^{p^mq^{n-1}}}\\
&=\frac{(-1)^r \frac{(-1)^{\sum_{i=0}^{n-2} (n-1-i)N_iq^i}}{\prod_{i=0}^{n-2}L_{n-1-i}^{N_iq^i}}}{\frac{(-1)^{\sum_{i=0}^{n-1} (n+r-2-i)N_i'q^i}}{\prod_{i=0}^{n-1}L_{n+r-2-i}^{N_i'q^i}}(-1)[1]^{p^mq^{n+r-3}}\cdots [r-1]^{p^mq^{n-1}}}\\
&=\frac{(-1)^{r+1}(-1)^{\sum_{i=0}^{n-2} (n-1-i)N_iq^i}\prod_{i=0}^{n-1}L_{n+r-2-i}^{N_i'q^i}}{(-1)^{\sum_{i=0}^{n-1} (n+r-2-i)N_i'q^i}\prod_{i=0}^{n-2}L_{n-1-i}^{N_iq^i}[1]^{p^mq^{n+r-3}}\cdots [r-1]^{p^mq^{n-1}}}\\
&=\displaystyle\frac{(-1)^{(r-1)(1+\sum N_i)}L_{n+r-2}^{N_0q^0}\cdots L_{r-1}^{(N_{n-1}-(q-p^m))q^{n-1}}}{[1]^{p^mq^{n+r-3}} [2]^{p^mq^{n+r-4}} \cdots [r-1]^{p^mq^{n-1}}L_{n-1}^{N_0q^0}\cdots L_1^{N_{n-2}q^{n-2}}}\zeta_A(q^{n+r-1}-\sum N_i'q^i).
\end{aligned}$$
\end{proof}\
\\
At the end of this section we prove Conjecture \ref{LKconj} $($b$)$.
\begin{theorem}\label{mainthm4}
For any $q>2$, $\zeta_A(1,q(q-1),q^3-q^2+q-1)$ is zeta-like. Furthermore,
$$\zeta_A(1,q(q-1),q^3-q^2+q-1)= \frac{[3]-1}{[3][2][1]^{q^2-q+1}}\zeta_A(q^3).$$
\end{theorem}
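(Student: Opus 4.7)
The plan is to invoke the Chang--Papanikolas--Yu criterion (Theorem \ref{CPY}), reducing the problem to exhibiting $\delta_1,\delta_2,\delta_3\in\oK[t]$ and $a,b\in\FF_q[t]$ that solve the system (\ref{E:Eqdelta}) for $(s_1,s_2,s_3)=(1,q(q-1),q^3-q^2+q-1)$ and $w=q^3$. Since $q-1\nmid q^3$ when $q>2$, the ``furthermore'' clause of Theorem \ref{CPY} then gives
\[
\frac{\zeta_A(1,q(q-1),q^3-q^2+q-1)}{\zeta_A(q^3)}=\frac{-b(\theta)\Gamma_w}{a(\theta)\Gamma_{s_1}\Gamma_{s_2}\Gamma_{s_3}},
\]
which I will match with the claimed value. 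The four Anderson--Thakur polynomials needed are already at hand: by Theorem \ref{mainproposition} we have $H_{s_1-1}=H_0=1$, $H_{s_2-1}=H_{q^2-q-1}=\Gamma_{q^2-q}$ (constant in $t$), and $H_{w-1}=H_{q^3-1}=\Gamma_{q^3}$, while the nontrivial $H_{s_3-1}=H_{q^3-q^2+q-2}$ was computed explicitly in Proposition \ref{example}.

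I would solve the system bottom-up. For the $\delta_3$-equation, the key is the favorable arithmetic $s_3+(q^2-q+1)=q^3$ and $s_3+1=q(q^2-q+1)$: expanding $aH_{s_3-1}^{(-1)}(t-\theta)^{s_3}$ using Proposition \ref{example} and factoring out $(t-\theta)$ collapses the right-hand side to the two-term form
\[
-a\bigl(\theta^q-\theta^{1/q}\bigr)^{q-2}\bigl[(t-\theta)^{q^3}+(\theta-\theta^{1/q})^{q^2-1}(t-\theta)^{q^3-q^2+q}\bigr],
\]
with both exponents being $q$-th powers, $(t-\theta)^{q^3}=t^{q^3}-\theta^{q^3}$ and $(t-\theta)^{q^3-q^2+q}=(t^q-\theta^q)^{q^2-q+1}$. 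Motivated by this, I would begin with the ansatz $\delta_3=\alpha_1(t-\theta)^{q^3}+\alpha_2(t-\theta)^{q^3-q^2+q}+\cdots$ and, after Frobenius-twisting and matching coefficients, enlarge it by further correction terms until the residual $\delta_3^{(-1)}(t-\theta)^{s_3}$ is absorbed. This is the main obstacle: unlike the clean $p^M(q-1)q^{m+i-2}$ pattern handled by Proposition \ref{mainproposition2}, the shape of $s_3$ causes the naive two-term ansatz to leave a nontrivial residual (the top-degree piece from $\alpha_1^{(-1)}(t^{q^3}-\theta^{q^2})(t-\theta)^{s_3}$ has degree $>q^3$ in $t$), so one must carefully determine $a$ and the $\alpha_i$ by solving a small telescoping system that balances both summands of $H_{s_3-1}^{(-1)}$ across the twist.

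Once $(\delta_3,a)$ is in hand, the second equation is routine: since $H_{s_2-1}=\Gamma_{q^2-q}$ is independent of $t$, one takes $\delta_2$ to be a rescaling of $\delta_3^{(-1)}$ multiplied by a suitable product of $(t-\theta^{q^{-j}})$ factors, exactly as in Proposition \ref{mainproposition2}. For the first equation I would build a telescoping sum $\delta_1=\sum_{j=0}^{2}F_j$ with $F_0=bH_{w-1}^{(-1)}(t-\theta)^w$ and $F_i=F_{i-1}^{(-1)}(t-\theta)^w$, exactly mirroring the proof of Theorem \ref{mainthm3}(a); since $H_{s_1-1}=1$, the equation reduces to the single closure condition $F_2+\delta_2^{(-1)}(t-\theta)^w=0$, which determines $b$ in terms of $a$, with Proposition \ref{ATbinomial} expected to supply the needed cancellation. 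Finally, the base-$q$ digit expansions yield $\Gamma_{s_1}=1$, $\Gamma_{s_2}=[1]^{q-2}$, $\Gamma_{s_3}=D_2^{q-1}$, $\Gamma_w=D_1^{q-1}D_2^{q-1}$, so $\Gamma_{s_1}\Gamma_{s_2}\Gamma_{s_3}/\Gamma_w=1/[1]$; substituting into the CPY ratio gives the claimed identity $\zeta_A(1,q(q-1),q^3-q^2+q-1)=\frac{[3]-1}{[3][2][1]^{q^2-q+1}}\zeta_A(q^3)$.
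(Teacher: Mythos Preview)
Your overall plan---invoke Theorem \ref{CPY}, feed in $H_0=1$, $H_{q^2-q-1}=\Gamma_{q^2-q}$, $H_{q^3-1}=\Gamma_{q^3}|_{\theta=t}$ and Proposition \ref{example} for $H_{q^3-q^2+q-2}$, then compute the ratio from the Carlitz factorials---is exactly the paper's strategy, and your $\Gamma$-values and final ratio are correct.  The difference lies in how the $\delta_i$ are actually produced, and here your proposal has a concrete gap.

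The simple telescope $\delta_1=\sum_{j=0}^{2}F_j$ with $F_0=bH_{w-1}^{(-1)}(t-\theta)^{w}$ and $F_i=F_{i-1}^{(-1)}(t-\theta)^{w}$ (this is the recipe of Theorem \ref{mainthm3}(b), not (a); in (a) the term $F_1$ is defined differently) yields the closure condition
\[
F_2=b\,\Gamma_{q^3}|_{\theta=t}\,(t^{q^3}-\theta^{q})(t^{q^3}-\theta^{q^2})(t-\theta)^{q^3}=-\delta_2^{(-1)}(t-\theta)^{q^3}.
\]
But once you have solved the $\delta_3$-equation (with the paper's $\delta_3=\frac{a[2]^{q-2}}{[1]}\big|_{\theta=t}(t-\theta)^{q^3}(t-\theta^{q})$; any solution carries such a $(t-\theta^{q})$ factor because of the two-term shape of $H_{s_3-1}$) and then the $\delta_2$-equation, the resulting $\delta_2$ necessarily contains the factor $(t-\theta^{q})$; indeed the paper obtains
\[
\delta_2^{(-1)}=\frac{-a[2]^{q-2}[1]^{q-3}}{[3]}\Big|_{\theta=t}\,(t^{q^3}-\theta^{q^2})(t^{q^3}-\theta^{q})\,(t-\theta).
\]
The extra $(t-\theta)$ on the right cannot be absorbed into $b\in\FF_q[t]$, so the closure condition fails.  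Proposition \ref{ATbinomial} does not help here; it is used in the proof of Theorem \ref{mainproposition}, not in solving~(\ref{E:Eqdelta}).  In short, the tuple $(1,q(q-1),q^3-q^2+q-1)$ does not fit the template of Proposition \ref{mainproposition2}/Theorem \ref{mainthm3}, and the telescoping $\delta_1$ you describe is incompatible with the $\delta_2$ forced by the lower two equations.

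The paper circumvents this by writing down explicit $a,b,\delta_1,\delta_2,\delta_3$ (with $\delta_1$ \emph{not} of telescoping shape: it has the form $C(t-\theta)^{q^3}\bigl[(t-\theta^{q})(t-\theta^{1/q})^{q^3}-\theta^{q^2}+t+1\bigr]$) and verifying each of the three equations by a direct computation using Proposition \ref{example} and Theorem \ref{mainproposition}.  To make your approach work you would either have to modify the $F_1$ term by hand (as in Theorem \ref{mainthm3}(a)) to accommodate the stray $(t-\theta^{q})$, or abandon the telescope and verify an explicit candidate.
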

\begin{proof}
We prove this theorem by providing a solution of Equation (\ref{E:Eqdelta}).
Let
\\
$$\begin{aligned}
&a= \{\Gamma_{q^3}[3]\}|_{\theta=t},\\
&b= \{[1]^{q-3}[2]^{q-2}(-[3]+1)\}|_{\theta=t},\\
&\delta_1= \frac{a[2]^{q-2}[1]^{q-3}|_{\theta=t}}{[3]|_{\theta=t}}(t-\theta)^{q^3}\left[(t-\theta^q)(t-\theta^{\frac{1}{q}})^{q^3}-\theta^{q^2}+t+1\right],\\
&\delta_2=\frac{-a[2]^{q-2}[1]^{q-3}|_{\theta=t}}{[3]|_{\theta=t}}(t-\theta)^{q^3}(t-\theta^{\frac{1}{q}})^{q^3}(t-\theta^q),\\
&\delta_3=\frac{a[2]^{q-2}|_{\theta=t}}{[1]|_{\theta=t}}(t-\theta)^{q^3}(t-\theta^q) .
\end{aligned}$$
\\
Then by Proposition \ref{example} and Theorem \ref{mainproposition},
\\
\\
$(1)$
\\
\\
$\begin{aligned}
&[\delta_3^{(-1)}+aH_{q^3-q^2+q-2}^{(-1)}](t-\theta)^{q^3-q^2+q-1}\\
&=\frac{a[2]^{q-2}|_{\theta=t}}{[1]|_{\theta=t}}(t-\theta)^{q^3-q^2+q}\left[(t-\theta^{\frac{1}{q}})^{q^3}-[1]|_{\theta=t}\{(t-\theta)^{q^2-q}+[1]|_{\theta=t}^{q^2-1}\}\right]\\
&=\frac{a[2]^{q-2}|_{\theta=t}}{[1]|_{\theta=t}}(t-\theta)^{q^3-q^2+q}\left[t^{q^3}-\theta^{q^2}-(t^q-t)(t-\theta)^{q^2-q}-(t^{q^3}-t^{q^2})\right]\\
&=\frac{a[2]^{q-2}|_{\theta=t}}{[1]|_{\theta=t}}(t-\theta)^{q^3-q^2+q}(t-\theta)^{q^2-q}(t-\theta^q)= \delta_3.
\end{aligned}$
\\
\\
$(2)$
\\
\\
$\begin{aligned}
&[\delta_2^{(-1)}+\delta_3^{(-1)}H_{q^2-q-1}^{(-1)}](t-\theta)^{q^3-1}\\
&=\frac{-a[2]^{q-2}[1]^{q-3}|_{\theta=t}}{[3]|_{\theta=t}}(t-\theta^{\frac{1}{q}})^{q^3}(t-\theta)(t-\theta)^{q^3-1}\left[(t-\theta^{\frac{1}{q^2}})^{q^3}-
\frac{[3]\Gamma_{q^2-q}|_{\theta=t}}{[1]|_{\theta=t}^{q-2}}\right]\\
&=\frac{-a[2]^{q-2}[1]^{q-3}|_{\theta=t}}{[3]|_{\theta=t}}(t-\theta^{\frac{1}{q}})^{q^3}(t-\theta)(t-\theta)^{q^3-1}(t-\theta^q)= \delta_2.
\end{aligned}$
\\
\\
$(3)$
\\
\\
$\begin{aligned}
&[\delta_1^{(-1)}+\delta_2^{(-1)}H_{0}^{(-1)}+bH_{q^3-1}^{(-1)}](t-\theta)^{q^3}\\
&=(t-\theta)^{q^3}\left[\frac{a[2]^{q-2}[1]^{q-3}|_{\theta=t}}{[3]|_{\theta=t}}(t-\theta^{\frac{1}{q}})^{q^3}(-\theta^{q}+t+1)+\{\Gamma_{q^3}[1]^{q-3}[2]^{q-2}(-[3]+1)\}|_{\theta=t}\right]\\
&=\frac{a[2]^{q-2}[1]^{q-3}|_{\theta=t}}{[3]|_{\theta=t}}(t-\theta)^{q^3}\left[(t^{q^3}-\theta^{q^2})(-\theta^{q}+t+1)-t^{q^3}+t+1\right]= \delta_1.
\end{aligned}$
\\
Since $q>2$, the ratio of $\zeta_A(1,q(q-1),q^3-q^2+q-1)$ to $\zeta_A(q^3)$ is
$$\frac{-b(\theta)\Gamma_{q^3}}{a(\theta) \Gamma_{1}\Gamma_{q^2-q}\Gamma_{q^3-q^2+q-1}}= \frac{[3]-1}{[3][2][1]^{q^2-q+1}}.$$
\end{proof}

\section{Main result on Eulerian multizeta values} \label{Eulerian}
In this section we will present two families of Eulerian multizeta values mentioned in Conjecture \ref{LRTconj} $(a)$ and Conjecture \ref{LRTconj} $(b)$ for $q=2$.  As a consequence, this confirms that all the multizeta values conjectured to be Eulerian in \cite{CPY}, Section 6.2, are indeed Eulerian.
\begin{theorem}\label{mainthm1}
For any positive integer $r>1$ and $n$, we have

$$\begin{aligned}
&\  \zeta_{A}(q^n-1, (q-1)q^n, \cdots, (q-1)q^{n+r-2} )\\
&= \zeta_{A}(q^n-1)\zeta_{A}((q-1), \cdots, (q-1)q^{r-2} )^{q^n}-\zeta_{A}(q^{n+1}-1, (q-1)q^{n+1}, \cdots, (q-1)q^{n+r-2} ).\\
\end{aligned}$$
Moreover, one derives from this formula that
$$\begin{aligned}
\zeta_{A}(q^n-1, (q-1)q^n, \cdots, (q-1)q^{n+r-2} )&= \displaystyle\frac{(-1)^{n+r-1}[n+r-2][n+r-3]\cdots[n]}{[1]^{q^{n+r-2}}[2]^{q^{n+r-3}}\cdots[r-1]^{q^{n}}L_{n+r-1}}\widetilde{\pi}^{q^{n+r-1}-1}\\
&=\displaystyle\frac{[n+r-2][n+r-3]\cdots[n]}{[1]^{q^{n+r-2}}[2]^{q^{n+r-3}}\cdots[r-1]^{q^{n}}}\zeta_{A}(q^{n+r-1}-1)
\end{aligned}$$

\end{theorem}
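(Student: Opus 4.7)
My plan is to establish the two assertions of the theorem in sequence: the recursive identity first, and then the closed-form expression by induction on $r$. A key preliminary observation is that in characteristic $p$ the $q^n$-th power distributes through the defining multi-sum of a multizeta value and multiplies each argument by $q^n$; in particular
\[
\zeta_A(q-1, (q-1)q, \ldots, (q-1)q^{r-2})^{q^n} = \zeta_A((q-1)q^n, (q-1)q^{n+1}, \ldots, (q-1)q^{n+r-2}),
\]
so, writing $T_{n,r}$ for this common value and $Z_{m,k} := \zeta_A(q^m-1, (q-1)q^m, \ldots, (q-1)q^{m+k-2})$, the recursion to prove is $Z_{n,r} = \zeta_A(q^n-1)\, T_{n,r} - Z_{n+1, r-1}$.

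To prove the recursion, I would expand $\zeta_A(q^n-1)\cdot T_{n,r}$ as a double series in $(d; e_1, \ldots, e_{r-1})$ with $d \geq 0$ from $\zeta_A(q^n-1) = \sum_d S_d(q^n-1)$ and $e_1 > \cdots > e_{r-1} \geq 0$ from $T_{n,r}$, then partition the sum by the relative position of $d$ among the $e_i$'s. The fully aligned subsum $d > e_1$ recovers $Z_{n,r}$, so the remaining non-aligned contributions (collisions $d = e_j$ and interleavings $e_{j-1} > d > e_j$) must combine to $Z_{n+1, r-1}$. This collapse is the central technical content. Its proof relies on Thakur's sum-shuffle relations for products $S_d(a)\cdot S_d(b)$, combined with the combinatorial observation that the base-$p$ digits of our exponents $q^n-1$ and $(q-1)q^{n+i}$ sit in disjoint ranges; this makes Lucas's theorem force many of the sum-shuffle correction coefficients to vanish, and a careful tracking of the survivors yields exactly $Z_{n+1, r-1}$.

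Once the recursion is established, the closed-form expression follows by induction on $r$. The base case $r = 2$, namely $Z_{n,2} = ([n]/[1]^{q^n})\,\zeta_A(q^{n+1}-1)$, is already known from \cite{LRT}. For the inductive step, one applies the recursion in the form $Z_{n,r} = \zeta_A(q^n-1)\cdot Z_{1, r-1}^{q^n} - Z_{n+1, r-1}$, substitutes the inductive closed-form expressions for both $Z_{1, r-1}$ and $Z_{n+1, r-1}$, and replaces $\zeta_A(q^k-1)$ by Carlitz's formula $(-1)^k \widetilde{\pi}^{q^k-1}/L_k$; the result simplifies to the claimed form using standard identities among the quantities $[k] = \theta^{q^k} - \theta$ and $L_k$, in particular $L_{k+1} = [k+1] L_k$. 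The principal obstacle is the sum-shuffle analysis underpinning the recursion: tracking the cancellations among non-aligned contributions requires delicate bookkeeping of base-$p$ digits across all positions $j$, and verifying that the surviving pieces reassemble into precisely $Z_{n+1, r-1}$ is the most delicate step.
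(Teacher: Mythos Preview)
Your overall architecture---prove the recursion, then induct on $r$ using Carlitz's evaluation---matches the paper, and the Frobenius observation $\zeta_A(s_1,\dots,s_k)^{q^n}=\zeta_A(q^ns_1,\dots,q^ns_k)$ is used there as well. The difference is in how the recursion is obtained. You propose a full interleaving decomposition of the product (splitting by the position of $d$ relative to every $e_j$) followed by sum--shuffle analysis at each collision, with Lucas-type digit arguments to kill unwanted terms. The paper avoids all of this: it splits only into the three cases $d>d_1$, $d=d_1$, $d<d_1$, applies the single identity
\[
S_d(q^n-1)\,S_d((q-1)q^n)=S_d(q^{n+1}-1)-S_d\bigl((q-1)q^n,\,q^n-1\bigr)
\]
(Thakur, \cite[p.~2332]{T1}) at $d=d_1$, and then observes that the correction term $-S_{d_1}((q-1)q^n,q^n-1)=-S_{d_1}((q-1)q^n)\sum_{d'<d_1}S_{d'}(q^n-1)$ cancels the \emph{entire} $d<d_1$ block in one stroke, since $d'$ ranges freely below $d_1$ in both expressions. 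What remains is exactly $Z_{n,r}+Z_{n+1,r-1}$. So no bookkeeping across multiple levels $j$ is needed, and no general sum--shuffle coefficients or Lucas arguments enter; the ``delicate step'' you anticipate simply does not arise. For the closed form, the paper also inducts on $r$, but it derives the base case $r=2$ directly from the recursion and Carlitz's formula $\zeta_A(q^k-1)=(-1)^k\widetilde{\pi}^{\,q^k-1}/L_k$ rather than citing \cite{LRT}; the inductive step then reduces to the identity $[n+r-1]-[r-1]^{q^n}=[n]$.
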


\begin{theorem}\label{mainthm2}
If $q=2$, then we have $$\zeta_A(1)\zeta_A(1,2,\cdots,2^{r-1})= \zeta_A(1,3,2^2,\cdots,2^{r-1}) + \zeta_A (1,1,2,\cdots,2^{r-2})^2$$ for $r>1$.
Furthermore, we have
$$\begin{aligned}
\zeta_A(1,3,2^2,\cdots,2^{r-1})&= \displaystyle \frac{[r]-1}{L_1[r]}\frac{1}{L_1^{2^{r-1}}L_2^{2^{r-2}}\cdots L_{r-3}^{2^2}L_{r-2}^{2^2}L_1^{2^r}}\widetilde{\pi}^{2^r}\\
&=\displaystyle \frac{[r]-1}{L_1[r]}\frac{1}{L_1^{2^{r-1}}L_2^{2^{r-2}}\cdots L_{r-3}^{2^2}L_{r-2}^{2^2}}\zeta(2^r).
\end{aligned}$$
for $r>1$.
\end{theorem}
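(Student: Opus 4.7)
The plan is to establish the recursive identity first, and then derive the explicit formula for $\zeta_A(1,3,2^2,\ldots,2^{r-1})$ by rearranging this identity and evaluating the other two terms.

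For the recursive identity, I would expand
\[
\zeta_A(1)\,\zeta_A(1,2,\ldots,2^{r-1}) \;=\; \sum_{e\geq 0,\;d_1>\cdots>d_r\geq 0} S_e(1)\,S_{d_1}(1)\,S_{d_2}(2)\cdots S_{d_r}(2^{r-1})
\]
and split the sum according to how $e$ compares to $d_1,\ldots,d_r$. The terms fall into \emph{insertion} cases (where $e$ differs from every $d_i$), which give depth-$(r{+}1)$ multizeta values, and \emph{merging} cases (where $e=d_i$ for some $i$), which give depth-$r$ terms with an extra factor $S_{d_i}(1)$ inside one slot. In characteristic $2$ the expansion collapses to exactly two surviving terms as follows: (i) the identity $S_d(1)^2=S_d(2)$ converts the merging at position~$1$ directly into $\zeta_A(2,2,2^2,\ldots,2^{r-1})=\zeta_A(1,1,2,\ldots,2^{r-2})^2$ (the Frobenius square being the second summand on the right); (ii) the insertion before $a_1$ and the insertion between $a_1$ and $a_2$ both yield $\zeta_A(1,1,2,\ldots,2^{r-1})$ and cancel because $2=0$; (iii) writing $S_d(1)\,S_d(2^{i-1})=S_d(1+2^{i-1})+T_d(1,2^{i-1})$ with $T_d(a,b):=\sum_{x\neq y\in A_{d+}}1/(x^a y^b)$, the merging at position~$2$ directly produces $\zeta_A(1,3,2^2,\ldots,2^{r-1})$ plus a $T_{d_2}(1,2)$ correction; (iv) all remaining middle insertions at positions~$\geq 2$ pair off with the $T$-corrections from the mergings at positions~$\geq 3$ and cancel, using the symmetry $T_d(a,b)=T_d(b,a)$ and the characteristic-$2$ vanishing $T_d(a,a)=0$.

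For the explicit formula, I would argue by induction on $r$. The base case $r=2$ is the identity $\zeta_A(1,3)=\zeta_A(1)\zeta_A(1,2)-\zeta_A(1,1)^2$, combined with $\zeta_A(1)=\widetilde\pi/L_1$ and the Theorem \ref{mainthm1} evaluation $\zeta_A(1,2)=-\widetilde\pi^3/(L_1^2\,[2])$, together with a separate direct evaluation of $\zeta_A(1,1)$ (a depth-$2$ case accessible via Anderson--Thakur / CPY). For the inductive step, rearranging the recursive identity yields
\[
\zeta_A(1,3,2^2,\ldots,2^{r-1}) \;=\; \zeta_A(1)\,\zeta_A(1,2,\ldots,2^{r-1}) \;-\; \zeta_A(1,1,2,\ldots,2^{r-2})^2.
\]
Theorem \ref{mainthm1} applied with $(q,n)=(2,1)$ gives $\zeta_A(1,2,\ldots,2^{r-1})=\frac{(-1)^{r}[r-1]\cdots[1]}{[1]^{2^{r-1}}\cdots[r-1]^2\,L_r}\widetilde\pi^{2^r-1}$, so the first term on the right becomes an explicit rational multiple of $\widetilde\pi^{2^r}$. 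To handle the squared term, I would re-apply the recursive identity at depth $r-1$, obtaining $\zeta_A(1,1,2,\ldots,2^{r-3})^2=\zeta_A(1)\zeta_A(1,2,\ldots,2^{r-2})-\zeta_A(1,3,\ldots,2^{r-2})$ (the latter given by the inductive hypothesis), and iterate downward to reach the base case; this yields a closed form for $\zeta_A(1,1,2,\ldots,2^{r-2})^2$. Subtracting and simplifying, using $L_r=L_{r-1}\cdot[r]$ and the identity $[r]=\theta^{q^r}-\theta$ to extract the factor $[r]-1$, produces the stated expression.

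The main obstacle is step~(iv) in the proof of the recursive identity: showing that after the obvious pairings the remaining middle-insertion terms cancel exactly against the $T$-corrections from higher-position mergings. Already in the depth-$2$ case this reduces to the nontrivial identity $\zeta_A(1,2,1)=\sum_{d_1>d_2}S_{d_1}(1)\,T_{d_2}(1,2)$ in characteristic~$2$, and the general case requires an analogous bookkeeping at every position. A secondary, but more routine, difficulty is the combinatorial simplification in the inductive step for the explicit formula, where many products of $L_i$'s and $[i]$'s must collapse to the compact expression $\frac{[r]-1}{L_1[r]}$ times the claimed monomial in the $L_i$.
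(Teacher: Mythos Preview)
Your proposal has two genuine gaps, one in each half of the argument.

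\medskip
\textbf{The recursive identity.} Your step (iv) does not do what you claim. After (i)--(iii) you are left with: the $T$-correction at position~$2$; the insertion terms $\zeta_A(1,2,1,4,\ldots)$, $\zeta_A(1,2,4,1,8,\ldots)$, \ldots, $\zeta_A(1,2,\ldots,2^{r-1},1)$; the \emph{main} merging terms $\zeta_A(1,2,1{+}2^{i-1},2^i,\ldots)$ for $i\ge 3$; and the $T$-corrections at positions $\ge 3$. You propose to pair the insertions with the $T$-corrections at positions $\ge 3$ and use $T_d(a,b)=T_d(b,a)$ and $T_d(a,a)=0$. But there are $r-1$ insertion terms and only $r-2$ such $T$-corrections, the $T$-correction at position~$2$ is still unaccounted for, and the main merging terms at positions $\ge 3$ are ordinary multizeta values, not $T$-quantities, so the symmetry of $T_d$ says nothing about them. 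The paper's argument is both simpler and avoids all of this: one splits only into the five ranges $d>d_1$, $d=d_1$, $d_2<d<d_1$, $d=d_2$, $d<d_2$. The first and third are equal (by symmetry of the two factors $S(1)$) and cancel; at $d=d_2$ one uses Thakur's power-sum relation (Lemma~\ref{main lemma1}) in the form $S_{d_2}(1)S_{d_2}(2)=S_{d_2}(3)+S_{d_2}(2,1)$, and since $S_{d_2}(2,1)=S_{d_2}(2)\sum_{d'<d_2}S_{d'}(1)$ this correction is \emph{identically} the entire lump $d<d_2$, so their sum is $2\cdot(\text{lump})=0$. No splitting at positions $\ge 3$ is ever needed; the key input you are missing is precisely Lemma~\ref{main lemma1}, not the elementary symmetry of $T_d$.

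\medskip
\textbf{The explicit formula.} Your induction does not close. At level $r$ the identity you proved (once part one is fixed) expresses $\zeta_A(1,3,2^2,\ldots,2^{r-1})$ in terms of $\zeta_A(1,1,2,\ldots,2^{r-2})$, which has depth~$r$ and weight $2^{r-1}$. Applying the identity at level $r-1$, as you suggest, produces $\zeta_A(1,1,2,\ldots,2^{r-3})^2$, which has depth $r-1$ and weight $2^{r-2}$: a different object of the wrong weight. Iterating downward only produces the sequence $\zeta_A(1,1,\ldots,2^{k-2})$ for $k<r$, never the one you need; and trying to reach it via the level-$r$ identity itself is circular. The paper does not induct here at all: it inputs the already-known Lara Rodr\'{\i}guez--Thakur evaluation
\[
\zeta_A(1,1,2,\ldots,2^{r-2})=\frac{1}{[1]^{2^{r-2}}[2]^{2^{r-3}}\cdots[r-1]\,L_1^{2^{r-1}}}\,\widetilde\pi^{\,2^{r-1}}
\]
(the $q=2$ case of their formula for $\zeta_A(1,q-1,(q-1)q,\ldots)$), together with Theorem~\ref{mainthm1} for $\zeta_A(1,2,\ldots,2^{r-1})$ and Carlitz for $\zeta_A(1)$, and finishes by a direct computation.
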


Aside from Carlitz's evaluations in Lemma \ref{bernoulli}, the key point of the proof is the relations among the power sums $S_d(m)$.

\begin{lemma}\label{main lemma1}
For any $d\geq 1$, one has
$$S_d(q^n-1)S_d((q-1)q^n) = S_d(q^{n+1}-1)- S_d ((q-1)q^n, q^n-1)$$
\end{lemma}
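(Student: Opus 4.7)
The plan is to first absorb the $S_d((q-1)q^n,\,q^n-1)$ term into the left side of the claimed equality, reducing the lemma to a single power-sum identity. By the depth-two definition
\[
S_d((q-1)q^n,\,q^n-1) \;=\; S_d((q-1)q^n)\cdot\sum_{0\le e<d} S_e(q^n-1),
\]
the lemma is equivalent to
\[
T_d^{(n)}\cdot S_d((q-1)q^n) \;=\; S_d(q^{n+1}-1), \qquad T_d^{(n)}:=\sum_{0\le e\le d} S_e(q^n-1).
\]
Thus I only have to establish a polynomial identity among these three ingredients.

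The heart of the proof is to produce closed-form expressions. The characteristic-$p$ Frobenius $S_d(kq^n) = S_d(k)^{q^n}$ gives $S_d((q-1)q^n) = S_d(q-1)^{q^n}$ at once. The main technical step is to prove by induction on $d$ the formula
\[
S_d(q^n-1) \;=\; \frac{[n][n+1]\cdots[n+d-1]}{L_d^{q^n}}.
\]
My plan for this induction is to use the Euclidean decomposition $a(x)=x\,b(x)+\alpha_0$ of $a\in A_{d+}$ with $b\in A_{(d-1)+}$ and $\alpha_0\in\FF_q$, and carry out the sum over $\alpha_0$ using the partial-fraction identity
\[
\sum_{\alpha\in\FF_q}\frac{u+\alpha}{v+\alpha} \;=\; \frac{v-u}{v^q-v}
\]
(valid in characteristic $p$ because $q\equiv 0$). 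The numerator and denominator that arise are then divisible by $[n]$ and $[1]^{q^n}$ respectively, thanks to the Carlitz relation $[m+1]-[m] = [1]^{q^m}$; after cancelling, the residual sum over $b$ is of the same shape but at degree $d-1$ with shifted indices, driving the induction and producing the claimed product of brackets. Specializing to $n=1$ yields $S_d(q-1)=1/L_d^{q-1}$, hence $S_d((q-1)q^n)=1/L_d^{(q-1)q^n}$.

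The final assembly is then immediate. Summing the closed form for $S_e(q^n-1)$ over $e\le d$ telescopes, via $[m+1]-[m]=[1]^{q^m}$ and $L_e/L_{e-1}=[e]$ (equivalently, by induction on $d$ using $[d]^{q^n}+[n]=[n+d]$), to
\[
T_d^{(n)} \;=\; \frac{[n+1][n+2]\cdots[n+d]}{L_d^{q^n}},
\]
and multiplying by $S_d((q-1)q^n)=1/L_d^{(q-1)q^n}$ collapses denominators to $[n+1]\cdots[n+d]/L_d^{q^{n+1}}$, which is $S_d(q^{n+1}-1)$ by the same closed form with $n$ replaced by $n+1$. The principal obstacle is the $d$-induction for the closed form of $S_d(q^n-1)$: the base case $d=1$ is a direct partial-fraction calculation, but for $d\ge 2$ one must carefully bookkeep the interaction of the $\alpha_0$-sum with the residual $b$-sum so that the product $[n]\cdots[n+d-1]$ emerges cleanly over $L_d^{q^n}$.
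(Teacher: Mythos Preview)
Your approach is correct, and in fact you give considerably more than the paper does: the paper's entire proof of this lemma is a citation to Thakur \cite[pp.~2332]{T1}, so there is no in-paper argument to compare against. Your reduction of the statement to
\[
\Bigl(\sum_{e\le d}S_e(q^n-1)\Bigr)\cdot S_d((q-1)q^n)\;=\;S_d(q^{n+1}-1)
\]
is the natural one, and the closed form you invoke,
\[
S_d(q^n-1)\;=\;\frac{[n][n+1]\cdots[n+d-1]}{L_d^{\,q^n}},
\]
is correct (and classical, going back to Carlitz in the case $n=1$, with the general $n$ appearing in Thakur's work). Your telescoping computation of $T_d^{(n)}$ via $[d]^{q^n}+[n]=[n+d]$ and the final multiplication are clean and accurate.

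One point deserves tightening. Your phrase ``the residual sum over $b$ is of the same shape but at degree $d-1$ with shifted indices'' suggests that after the $\alpha_0$-sum one literally obtains $S_{d-1}(q^{m}-1)$ for some shifted $m$; that is not the case. What actually happens is that writing $a=\theta^d+\alpha_{d-1}\theta^{d-1}+\cdots+\alpha_0$ and summing over $\alpha_0$ via your partial-fraction identity yields (after extracting the factor $[n]/[1]^{q^n}$, whose divisibility you correctly justify) an expression of the form $(u_1+\alpha_1)/(v_1+\alpha_1)$ in which the coefficient of $\alpha_1$ is again $1$ on top and bottom. One then iterates: the $\alpha_k$-sum extracts $[n+k]/[k+1]^{q^n}$, and after $d$ steps the product $[n]\cdots[n+d-1]/L_d^{q^n}$ emerges. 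So the induction is over the coefficients $\alpha_0,\ldots,\alpha_{d-1}$ rather than a reduction to $S_{d-1}$, and the bookkeeping you flag as the ``principal obstacle'' is exactly checking that at each stage the next coefficient appears with coefficient $1$ in both numerator and denominator. With that clarification your argument goes through.
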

\begin{proof}
See \cite[pp.2332]{T1}.
\end{proof}

\begin{proof1}\ \ref{mainthm1}
\\

Consider the product
$$\begin{aligned}
&\ \zeta_{A}(q^n-1)\zeta_{A}((q-1)q^n, \cdots, (q-1)q^{n+r-2} )\\
&= \sum_{d\geq0}S_d(q^n-1) \sum_{d_1>d_2>\cdots>d_{r-1}\geq 0} S_{d_1}((q-1)q^n)\cdots S_{d_{r-1}}((q-1)q^{n+r-2})\\
&=\sum_{d_1>d_2>\cdots>d_{r-1}\geq 0\atop d>d_1} + \sum_{d_1>d_2>\cdots>d_{r-1}\geq 0\atop d=d_1} +\sum_{d_1>d_2>\cdots>d_{r-1}\geq 0\atop d<d_1}
S_d(q^n-1) S_{d_1}((q-1)q^n)\cdots S_{d_{r-1}}((q-1)q^{n+r-2}) \\
&\hspace{450pt}(\ref{mainthm1}.1)
\end{aligned}$$
By Lemma \ref{main lemma1},
$$\begin{aligned}
&= \sum_{d_1>d_2>\cdots>d_{r-1}\geq 0\atop d=d_1} S_d(q^n-1) S_{d_1}((q-1)q^n)\cdots S_{d_{r-1}}((q-1)q^{n+r-2})\\
&= \sum_{d_1>d_2>\cdots>d_{r-1}\geq 0}  [S_{d_1}(q^{n+1}-1)- S_{d_1} ((q-1)q^n, q^n-1)]S_{d_2}((q-1)q^{n+1})\cdots S_{d_{r-1}}((q-1)q^{n+r-2})\\
&=\sum_{d_1>d_2>\cdots>d_{r-1}\geq 0}  S_{d_1}(q^{n+1}-1)S_{d_2}((q-1)q^{n+1})\cdots S_{d_{r-1}}((q-1)q^{n+r-2})\\
&- \sum_{d_1>d_2>\cdots>d_{r-1}\geq 0 \atop d'<d_1} S_{d_1} ((q-1)q^n)S_{d'}(q^n-1)S_{d_2}((q-1)q^{n+1})\cdots S_{d_{r-1}}((q-1)q^{n+r-2})
\end{aligned}$$
So (\ref{mainthm1}.1) leads to

$$\begin{aligned}
&\sum_{d_1>d_2>\cdots>d_{r-1}\geq 0\atop d>d_1}S_d(q^n-1) S_{d_1}((q-1)q^n)\cdots S_{d_{r-1}}((q-1)q^{n+r-2})\\
&+ \sum_{d_1>d_2>\cdots>d_{r-1}\geq 0} S_{d_1}(q^{n+1}-1)S_{d_2}((q-1)q^{n+1})\cdots S_{d_{r-1}}((q-1)q^{n+r-2})\\
&= \zeta_{A}(q^n-1, (q-1)q^n, \cdots, (q-1)q^{n+r-2} )+ \zeta_{A}(q^{n+1}-1, (q-1)q^{n+1}, \cdots, (q-1)q^{n+r-2} )
\end{aligned}$$
\\
For the second part, we will prove by mathematical induction on $r>1$.
When $r=2$,   $$\zeta_{A}(q^n-1, (q-1)q^n) = \zeta_{A}(q^n-1)\zeta_{A}((q-1)q^n) -\zeta_{A}(q^{n+1}-1)$$
Since $\zeta_{A}(q^n-1) = \displaystyle\frac{(-1)^n}{L_n}\widetilde{\pi}^{q^{n}-1} $, we have
\\
\\
$\begin{aligned}
&\ \ \ \zeta_{A}(q^n-1)\zeta_{A}((q-1)q^n) -\zeta_{A}(q^{n+1}-1)\\
&=\displaystyle\frac{(-1)^n}{L_n}\widetilde{\pi}^{q^{n}-1}(\displaystyle\frac{(-1)}{L_1}
\widetilde{\pi}^{q-1})^{q^n}-\displaystyle\frac{(-1)^{n+1}}{L_{n+1}}\widetilde{\pi}^{q^{n+1}-1}\\
&=[\displaystyle\frac{(-1)^{n+q^n}}{L_nL_1^{q^n}}-\displaystyle\frac{(-1)^{n+1}}{L_{n+1}}]\widetilde{\pi}^{q^{n+1}-1}\\
&=\displaystyle\frac{(-1)^{n+1}[[n+1]-[1]^{q^n}]}{L_{n+1}L_1^{q^n}}\widetilde{\pi}^{q^{n+1}-1}\\
&=\displaystyle\frac{(-1)^{n+1}[n]}{L_{n+1}L_1^{q^n}}\widetilde{\pi}^{q^{n+1}-1} = \displaystyle\frac{[n]}{[1]^{q^n}}\zeta(q^{n+1}-1).
\end{aligned}$
\\
\\
Assume that the statement holds for any $n$ with depth $<r$, then by the first recursive formula and induction hypothesis we have
\\
\\
$\begin{aligned}
&\ \ \zeta_{A}(q^n-1, (q-1)q^n, \cdots, (q-1)q^{n+r-2} )\\
&= \zeta_{A}(q^n-1)\zeta_{A}((q-1), \cdots, (q-1)q^{r-2} )^{q^n}-\zeta_{A}(q^{n+1}-1, (q-1)q^{n+1}, \cdots, (q-1)q^{n+r-2} ).\\
&= \displaystyle\frac{(-1)^n}{L_n}\widetilde{\pi}^{q^{n}-1}(\displaystyle\frac{(-1)^{r-1}[r-2]\cdots[1]}{[1]^{q^{r-2}}\cdots[r-2]^{q}L_{r-1}}\widetilde{\pi}^{q^{r-1}-1})^{q^n}
-  \displaystyle\frac{(-1)^{n+r-1}[n+r-2]\cdots[n+1]}{[1]^{q^{n+r-2}}\cdots[r-2]^{q^{n+1}}L_{n+r-1}}\widetilde{\pi}^{q^{n+r-1}-1}\\
&=[\displaystyle\frac{(-1)^{n+(r-1)q^n}}{[1]^{q^{n+r-2}}\cdots[r-2]^{q^{n+1}}[r-1]^{q^{n}}L_{n}}-\displaystyle\frac{(-1)^{n+r-1}[n+r-2]\cdots[n+1]}{[1]^{q^{n+r-2}}\cdots[r-2]^{q^{n+1}}L_{n+r-1}}]\widetilde{\pi}^{q^{n+r-1}-1}\\
&=[\displaystyle\frac{(-1)^{n+r-1}[n+r-1]\cdots[n+1]-(-1)^{n+r-1}[r-1]^{q^n}[n+r-2]\cdots[n+1]}{[1]^{q^{n+r-2}}\cdots[r-2]^{q^{n+1}]}[r-1]^{q^{n}}L_{n+r-1}}]\widetilde{\pi}^{q^{n+r-1}-1}\\
&=[\displaystyle\frac{(-1)^{n+r-1}[n+r-2]\cdots[n+1][n]}{[1]^{q^{n+r-2}}\cdots[r-2]^{q^{n+1}]}[r-1]^{q^{n}}L_{n+r-1}}]\widetilde{\pi}^{q^{n+r-1}-1}\\
&=\displaystyle\frac{[n+r-2][n+r-3]\cdots[n]}{[1]^{q^{n+r-2}}[2]^{q^{n+r-3}}\cdots[r-1]^{q^{n}}}\zeta_{A}(q^{n+r-1}-1).
\end{aligned}$
\\
\\

\end{proof1}
\
\\
Similarly, by Carlitz's evaluations and  relations on power sums we can prove Theorem \ref{mainthm2}.
\
\\
\\
\begin{proof1}\ \ref{mainthm2}

Observe that
$$\begin{aligned}
\zeta_A(1,1,2,\cdots,2^{r-1})&=\sum_{d_1>d_2>\cdots>d_{r}\geq 0\atop d>d_1} S_d(1) S_{d_1}(1)\cdots S_{d_{r}}(2^{r-1}) \\
&=\sum_{d_1>d_2>\cdots>d_{r}\geq 0\atop d_2<d<d_1}S_d(1) S_{d_1}(1)\cdots S_{d_{r}}(2^{r-1}).
\end{aligned}$$

$$\begin{aligned}
&\ \zeta_A(1)\zeta_A(1,2,\cdots,2^{r-1})\\
&= \sum_{d\geq0}S_d(1) \sum_{d_1>d_2>\cdots>d_{r}\geq 0} S_{d_1}(1)\cdots S_{d_r}(2^{r-1})\\
&=\sum_{d_1>d_2>\cdots>d_{r}\geq 0\atop d>d_1} + \sum_{d_1>d_2>\cdots>d_{r}\geq 0\atop d=d_1} +\sum_{d_1>d_2>\cdots >d_{r}\geq 0\atop d_2<d<d_1}
+\sum_{d_1>d_2>\cdots>d_{r}\geq 0\atop d=d_2}+\sum_{d_1>d_2>\cdots>d_{r}\geq 0\atop d<d_2}S_d(1) S_{d_1}(1)\cdots S_{d_{r}}(2^{r-1}) \\
&=\sum_{d_1>d_2>\cdots>d_{r}\geq 0\atop d=d_1}
+\sum_{d_1>d_2>\cdots>d_{r}\geq 0\atop d=d_2}+\sum_{d_1>d_2>\cdots>d_{r}\geq 0\atop d<d_2}S_d(1) S_{d_1}(1)\cdots S_{d_{r}}(2^{r-1}), \\
&\hspace{450pt}(\ref{mainthm2}.1)
\end{aligned}$$
\\
For $d=d_1$, it is clear that $$S_{d_1}(1)S_{d_1}(1) = S_{d_1}(2).$$
\\
For $d=d_2$, by using $(\ref{main lemma1}.1)$ we have $$S_{d_2}(1)S_{d_2}(2) = S_{d_2}(3)+ S_{d_2}(2,1).$$
\\
It follows that (\ref{mainthm2}.1) equals

$\begin{aligned}
&\sum_{d_1>d_2>\cdots>d_{r}\geq 0\atop d=d_1} S_{d_1}(2)S_{d_2}(2)\cdots S_{d_{r}}(2^{r-1}) + \sum_{d_1>d_2>\cdots>d_{r}\geq 0\atop d=d_2}
S_{d_1}(1)[S_{d_2}(3)+S_{d_2}(2,1)]\cdots S_{d_{r}}(2^{r-1})\\
&+\sum_{d_1>d_2>\cdots>d_{r}\geq 0\atop d<d_2}S_d(1) S_{d_1}(1)\cdots S_{d_{r}}(2^{r-1})\\
&= \sum_{d_1>d_2>\cdots>d_{r}\geq 0\atop d=d_1} S_{d_1}(2)S_{d_2}(2)\cdots S_{d_{r}}(2^{r-1}) + \sum_{d_1>d_2>\cdots>d_{r}\geq 0\atop d=d_2}
S_{d_1}(1)S_{d_2}(3)\cdots S_{d_{r}}(2^{r-1})\\
&+2\sum_{d_1>d_2>\cdots>d_{r}\geq 0\atop d<d_2}S_d(1) S_{d_1}(1)\cdots S_{d_{r}}(2^{r-1})\\
&=\zeta_A(2,2,2^2,\cdots,2^{r-1}) + \zeta_A(1,3,2^2,\cdots,2^{r-1})\\
&=\zeta_A(1,1,2,\cdots,2^{r-2})^2 + \zeta_A(1,3,2^2,\cdots,2^{r-1}).
\end{aligned} $
\\

For the second part,
by Lemma \ref{bernoulli}, Theorem \ref{LRTthm1} and \ref{mainthm1}, we have
$$\begin{aligned}
&\zeta_A(1) = \displaystyle\frac{1}{L_1} \widetilde{\pi}\\
&\zeta_A(1,1,2,\cdots,2^{r-2}) =  \displaystyle\frac{1}{[1]^{2^{r-2}}[2]^{2^{r-3}}\cdots [r-1]^{2^0}L_1^{2^{r-1}}}\widetilde{\pi}^{2^{r-1}}\\
&\zeta_A(1,2,\cdots,2^{r-1}) = \displaystyle\frac{[r-1]\cdots[1]}{[1]^{2^{r-1}}\cdots[r-1]^2L_r}\widetilde{\pi}^{2^r-1}
\end{aligned}$$
\
\\
So
\
\\
\\
$\begin{aligned}
&\  \zeta_A(1,3,2^2,\cdots,2^{r-1})\\
&=\zeta_A(1)\zeta_A(1,2,\cdots,2^{r-1}) + \zeta_A(1,1,2,\cdots,2^{r-2})^2\\
&= \displaystyle\frac{[r-1]\cdots[1]}{[1]^{2^{r-1}}\cdots[r-1]^2L_rL_1}\widetilde{\pi}^{2^r}+
\displaystyle\frac{1}{[1]^{2^{r-1}}[2]^{2^{r-2}}\cdots [r-1]^{2^1}L_1^{2^{r}}}\widetilde{\pi}^{2^{r}}\\
&=\displaystyle\frac{1}{[1]^{2^{r-1}}[2]^{2^{r-2}}\cdots [r-1]^{2^1}}\widetilde{\pi}^{2^{r}}[\displaystyle\frac{L_{r-1}}{L_rL_1}+\displaystyle\frac{1}{L_1^{2^r}}]\\
&=\displaystyle\frac{1}{[1]^{2^{r-1}}[2]^{2^{r-2}}\cdots [r-1]^{2^1}}\displaystyle\frac{L_1^{2^r-1}+[r]}{[r]}\displaystyle\frac{1}{L_1^{2^r}}\widetilde{\pi}^{2^{r}}\\
&\hspace{400pt} (\ref{mainthm2}.1)
\end{aligned}$
\
\
\\
\\
Note that

$$\begin{aligned}
L_1^{2^{r-2}}L_2^{2^{r-3}}\cdots L_{r-3}^{2^2}L_{r-2}^{2^2} &=
[1]^{2^{r-2}+2^{r-3}+\cdots+2^2+2^2}[2]^{2^{r-3}+2^{r-4}+\cdots+2^2+2^2}\cdots[r-2]^{2^2}\\
&=[1]^{2^{r-1}}[2]^{2^{r-2}}\cdots[r-2]^{2^2}.
\end{aligned}$$
\\
Here we adopt the usual convention on empty sums, products and indexing.

On the other hand we have

$$\begin{aligned}
\displaystyle\frac{L_1^{2^r-1}+[r]}{[r][r-1]^2} &= \displaystyle\frac{L_1^{2^r}+[r][1]}{[r][r-1]^2[1]}\\
&= \displaystyle\frac{[r+1]+[r]+[r][1]}{[r][r-1]^2[1]}\\
&= \displaystyle\frac{[r-1]^2([r]+1)}{[r][r-1]^2[1]} = \displaystyle\frac{[r]+1}{[r][1]}.
\end{aligned}$$
\
\\
Applying this to (\ref{mainthm2}.1), we finish the proof.
\end{proof1}
\
\\
{\bf Acknowledgments} The author is grateful to J. Yu for his careful reading, useful discussions and helpful suggestions. She also thanks
C.-Y. Chang for helpful comments, and thanks Y.-L. Kuan and Y.-S. Lin for their support of the data collection.

\end{document}